\newtheorem{thm}{Theorem}[section] 
\newtheorem{cor}[thm]{Corollary} 
\newtheorem{lem}[thm]{Lemma} 
\newtheorem{prop}[thm]{Proposition}
\theoremstyle{definition} 
\newtheorem{defn}[thm]{Definition}
\theoremstyle{remark}  
\newtheorem{rem}[thm]{Remark}  
\def\beq{\begin{eqnarray}}  
\def\eeq{\end{eqnarray}}  
\def\bsp{\begin{split}}  
\def\esp{\end{split}}
\def\d{\mathrm{d}}  
\def\diag{\mathrm{diag}}  
\def\J{ J } 
\def\R{ \mathbb{R}}
\def\A{{\sf A}}  
\def\C{{\mathbb{C}}}
\newcommand{\mb}[1]{{\mathbb #1}}   
\newcommand{\mbold}[1]{\mbox{\boldmath{\ensuremath{#1}}}}
\begin{document}   
   
\title{\Large\textbf{Wick rotations and real GIT}}  
\author{{\large\textbf{Christer Helleland} and \textbf{Sigbj\o rn Hervik }    }
 \vspace{0.3cm} \\     
Faculty of Science and Technology,\\     
 University of Stavanger,\\  N-4036 Stavanger, Norway         
\vspace{0.3cm} \\      
\texttt{ christer.helleland@uis.no}, \\
\texttt{sigbjorn.hervik@uis.no} }     
\date{\today}     
\maketitle   
\pagestyle{fancy}   
\fancyhead{} 
\fancyhead[EC]{Helleland  and  Hervik}   
\fancyhead[EL,OR]{\thepage}   
\fancyhead[OC]{Wick rotations and real GIT}   
\fancyfoot{} 

\begin{abstract}
We define Wick-rotations by considering pseudo-Riemannian manifolds as real slices of a holomorphic Riemannian manifold. From a frame bundle viewpoint Wick-rotations between different pseudo-Riemannian spaces can then be studied through their structure groups which are real forms of the corresponding complexified Lie group (different real forms $O(p,q)$ of the complex Lie group $O(n,\C)$). In this way, we can use real GIT (geometric invariant theory) to derive several new results regarding the existence, and non-existence, of such Wick-rotations.  
As an explicit example, we Wick rotate a known $G_2$-holonomy manifold to a pseudo-Riemannian manifold with split-$G_2$ holonomy. 

\end{abstract}

\section{Introduction}
In this paper we will study so-called Wick-rotations which was first used in physics as a mathematical trick relating Minkowski space to flat Euclidean space. Here we will generalise the concept of Wick-rotations to more general pseudo-Riemannian geometries by considering the complexification to a holomorphic Riemannian manifold. The real pseudo-Riemannian manifolds will now manifest themselves as real slices of the complex holomorphic geometry. Utilizing this description we define Wick-rotations of pseudo-Riemannian manifolds as well as the stronger concept of a \emph{standard} Wick rotation. 

There are previous works considering complex geometry and Wick-rotations in different contexts \cite{complexGR,exact,OP,Loz,Visser,PV}. Here, we will define the Wick-rotations based on observations made in \cite{OP} which is related to the definition of \emph{Wick-related spaces} given in \cite{PV}. In fact, we adopt this definition but define the stronger concepts of \emph{Wick-rotations} and \emph{standard Wick-rotations}. This enables us to connect  the study of Wick-rotations to real GIT \cite{BC,RS} which recently has seen its appearence in the classification of pseudo-Riemannian geometries \cite{align,minimal}. Using old, as well as some new, results from real GIT we  give several results regarding the possibily of Wick-rotating pseudo-Riemannian spaces to different signatures (see also \cite{HH}). 

In this paper we will reserve the notion of \emph{Riemannian space} to the case when the metric is positive definite (of signature $(++..+)$). The Lorentzian case is the case of signature $(-++..+)$. Note also that the existence of the "anti-isometry" which switches the sign of the metric: $g\mapsto -g$. This anti-isometry induces the group isomorphism $O(p,q)\rightarrow O(q,p)$ and hence our results are independent under this map.

\section{Mathematical Preliminaries}
Let $E$ be a complex vector space. By considering only scalar multiplications by $\R\subset\C$, we can define a real vector space $E_\R$ whose points are identical to $E$. Multiplication with $i$ in $E$ defines an automorphism $\J:E_\R\longrightarrow E_\R$ satisfying:
\beq \label{J^2}\J\circ\J=-{\rm Id}. \eeq 
An endomorphism $\J$ satisfying eq.(\ref{J^2}) is called a \emph{complex structure} of $E_\R$. This correspondence between the complex vector space $E$ and the real vector space $E_\R$ equipped with  the  complex structure $\J$ defines an isomorphism of these categories. 

If we consider a complex vector subspace $W\subset E$, then its corresponding real vector space $W_\R$ is a vector subspace of $E_\R$ being invariant under $\J$. On the other hand, if $W$ is a vector subspace of $E_\R$ being invariant under $\J$ then we say that $W$ is a \emph{complex subspace}. 

The linear map $\J$ can also be extended to the complexification $E^{\C}:=E_\R\otimes_\R\C$. The complexification $E^{\C}$ can then be split into the sum of the eigenspaces: 
\[ E^{1,0}=\{u\in E_\R\otimes_\R\C\big{|}\J u=iu\} \] 
and 
\[ E^{0,1}=\{u\in E_\R\otimes_\R\C\big{|}\J u=-iu\}. \] 

\subsection{Real form of a complex vector space}
A complex vector space $E$ may be the complexification of a real vector space $W$: 
\[ E=W^{\C}:=W\otimes_\R\C,\] 
in which case we will call $W$ a \emph{real form} of $E$. 

Assume now that $W$ is a vector space over $\R$. Then $W$ is naturally a real form of the complexification: $W^{\C}$, indeed the field extension $\R\hookrightarrow \C$ induces the inclusion $W\hookrightarrow W^{\C}, \ \ w\mapsto w\otimes 1$. Furthermore, complex conjugation in $\C$ gives rise to an anti-linear involution $\rho$ of $W^{\C}$:
\[ \rho(w\otimes z)=w\otimes \bar{z}.\] 
The fixed-point set of $\rho$ is $W$. Such a map is called a \textsl{conjugation map} of $W^{\C}$ associated to $W$.

\

Some special examples of conjugation maps can be easily found among semi-simple complex Lie algebras. For example $\mathfrak{sl}(2,\C)$ has real forms $\mathfrak{su}(2)$ and $\mathfrak{sl}(2,\mathbb{R})$ associated to the conjugation maps $X\mapsto -X^{\dagger}$ and $X\mapsto \bar{X}$ respectively.

\begin{defn}
Let $E$ be a complex vector space with a complex structure $\J: E_\R\longrightarrow E_\R$. Then real linear subspace $W\subset E_\R$ is called totally real if $W\cap \J(W)=0$. \end{defn} 
In particular we see that if $W$ is a maximal totally real subspace, then $W$ is a real form of $E$. We note that if $W$ is totally real implies that the composition $$W^{\C}:=W\otimes_{\R} \C \longrightarrow E^{\C} \longrightarrow E^{1,0} \cong E,$$ where the second map is the projection onto $E^{1,0}$, is injective. This is important to us in the following as this implies that there is a connection between the complexification of real pseudo-Riemannian geometry and holomorphic Riemannian geometry. 
We note also that if $W$ is a real form of $E$ then the complex dimension of $E$ and the real dimension of $W$ are equal. 

\subsection{Real slices} 

\begin{defn}
A \emph{holomorphic inner product} is a complex vector space $E$ equipped with a non-degenerate complex bilinear form $g$. 
\end{defn}
For a holomorphic inner product space $E$ we can always choose an orthonormal basis. By doing so we can identify $E$ with $\C^n$ and the holomorphic inner product can be written as
\beq \label{g0}
g_0(X,Y)=X_1Y_1+...+X_nY_n, 
\eeq
where $X=(X_1,...,X_n)$ and $Y=(Y_1,...,Y_n)$. 

Using this orthonormal basis it is also convenient to consider the group of transformations leaving the holomorphic inner product invariant. Consider a complex-linear map $A: E\longrightarrow E$. Using an orthonormal basis, we can represent the map by a complex matrix $ \A: \C^n \longrightarrow \C^n$. Requiring that $g_0(A(X),A(Y))=g_0(X,Y)$, for all $X,Y$, implies that $\A^t\A={\sf 1}$. Consequently, the matrix $\A$ must be  a complex orthogonal matrix; i.e., $\A\in O(n,\C)$. 

\begin{defn}\label{realslice}
Given a holomorphic inner product space $(E,g)$. Then if $W\subset E$ is a real linear subspace for which $g\big{|}_W$ is non-degenerate and real valued, i.e., $g(X,Y)\in \R,~ \forall X,Y\in W$, we will call $W$ a \emph{real slice}. 
\end{defn}
Some standard examples of real slices can be found by considering the holomorphic inner product space $(\C^n,g_0)$ with standard basis $\{e_1,...,e_n\}$. The real subspace: 
\beq
W=\R^n_p:=\text{span}\{ie_1,...,ie_p, e_{p+1},..., e_{n}\},
\eeq
is a real slice for any $0\leq p \leq n$. The restriction of $g_0$ to $W$ in this case is the standard pseudo-Euclidean metric with signature $(p,n-p)$. Using the standard coordinates $z_k=x_k+iy_k$ for $\C^n$, we see that the restriction $x_1=...=x_p=y_{p+1}=...=y_n=0$ gives us the real slice $\R^n_p$. 

Let us assume that $W$ and $\widetilde{W}$ are real slices of $(\C^n,g_0)$. Consider the real slice $W$ with real non-degenerate bilinear form $h$. By choosing a pseudo-orthonormal basis, we can write:
\beq \label{h}
h(X,Y)=-X_1Y_1-...-X_pY_p+X_{p+1}Y_{p+1}+...+X_nY_n, 
\eeq
where $X=(X_1,...,X_n)$ and $Y=(Y_1,...,Y_n)$ (real) for some $p$. This space has complexification $W^{\C}$ by allowing $X$ and $Y$ to be complex numbers, by: $X_k\mapsto z_k=x_k+iX_k$, $1\leq k\leq p$ and $X_k\mapsto z_k=X_k+iy_k$, $p+1\leq k\leq n$ (similarly for $Y$). By restricting to $W$, we see that $W^{\C}$ is $\C^n$ in an orthonomal frame. 
Doing the same for $\widetilde{W}$ we note that $(\widetilde{W})^{\C}$ is also $\C^n$ in (possibly another) orthonormal frame. However, since orthornormal frames are related by the action of the group $O(n,\C)$, the real slices $W$ and $\widetilde{W}$ are related via the action of the group $O(n,\C)$ on $\C^n$. Indeed, since any $n$-dimensional complex holomorphic inner product space $(E,g)$ can be identified with $(\C^n,g_0)$, any two real slices of $E$ are related through the action of $O(n,\C)$ on $E$. 

\begin{defn}\label{CartanInv} Let $W\subset (E,g)$ be a real slice. We say an involution $W\xrightarrow{\theta} W$, is a \emph{Cartan involution} of $W$, if $g_{\theta}(\cdot,\cdot):=g\big{|}_{W}(\cdot,\theta(\cdot))$, is an inner product on $W$. \end{defn}

Of course Cartan involutions always exist as linear maps, and the definition generalises the notion of a Cartan involution of a semi-simple Lie algebra. Indeed special examples can be found within semi-simple real forms $\mathfrak{g}$ of a complex semi-simple Lie algebra $\mathfrak{g}^{\C}$, which are real slices w.r.t the holomorphic Killing form: $-\kappa(-,-)$, on the complex Lie algebra $\mathfrak{g}^{\C}$. So a Cartan involution: $\mathfrak{g}\xrightarrow{\theta} \mathfrak{g}$, will in this case in fact be an involution of Lie algebras, and will be unique up to conjugation by inner automorphisms of $\mathfrak{g}$. Explicit examples of real slices are the pseudo-orthogonal real Lie algebras $\mathfrak{o}(p,q)$ of $\mathfrak{o}(n,\mathbb{C})$ with signatures $\Big{(}\binom{p}{2}+\binom{q}{2}, 2pq\Big{)}$. 

\

It is also important to note the following (see e.g., \cite{PV}):
\begin{prop} 
The real slices of a holomorphic inner product space are totally real subspaces. 
\end{prop}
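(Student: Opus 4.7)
The plan is to prove this by a direct, short argument that plays the real-valuedness of $g|_W$ against the $\C$-bilinearity of $g$. Given a real slice $W \subset (E,g)$, I want to show that $W \cap \J(W) = 0$, where $\J$ denotes multiplication by $i$ on $E_\R$. So suppose $v \in W \cap \J(W)$: then $v \in W$ and there exists $w \in W$ with $v = \J(w) = iw$.

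Now I would test $v$ against an arbitrary $u \in W$ using $g$. On the one hand, since both $v, u \in W$ and $W$ is a real slice, $g(v,u) \in \R$ by Definition \ref{realslice}. On the other hand, using $\C$-bilinearity of $g$ together with $v = iw$, I get
\[
g(v,u) = g(iw,u) = i\, g(w,u),
\]
and since $w, u \in W$, the factor $g(w,u)$ is real, so $g(v,u) \in i\R$. Thus $g(v,u) \in \R \cap i\R = \{0\}$. As this holds for every $u \in W$ and $g|_W$ is non-degenerate (again by Definition \ref{realslice}), we conclude $v = 0$. Hence $W \cap \J(W) = 0$, which is precisely the definition of totally real.

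There is essentially no serious obstacle here; the only thing to be careful about is invoking non-degeneracy \emph{restricted to} $W$ rather than on $E$. The argument works cleanly because the defining conditions of a real slice (real-valuedness and non-degeneracy of the restriction) were both used: real-valuedness forces $g(v,u)$ into $\R$, and non-degeneracy then upgrades the vanishing of $g(v,\cdot)|_W$ to the vanishing of $v$ itself.
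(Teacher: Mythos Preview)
Your argument is correct: the interplay of $\C$-bilinearity with real-valuedness on $W$ forces $g(v,u)\in\R\cap i\R=\{0\}$ for all $u\in W$, and then non-degeneracy of $g|_W$ gives $v=0$. Note that the paper does not actually supply its own proof of this proposition but simply refers the reader to \cite{PV}; your short direct argument is precisely the standard one.
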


\subsection{Compatible real forms}\label{compatible}

Associated to any real form $W$ of a complex vector space $E\cong W^{\C}$ we know that there is a conjugation map $E\xrightarrow{\sigma} E$ with fix points $W$. The space $E$ may have another real form $\widetilde{W}$, also with a conjugation map $\tilde{\sigma}$ which fixes pointwise $\widetilde{W}$. So we have the notion of \textsl{compatibility} among two real forms in the following definition:

\begin{defn} The two real forms $W$ and $\widetilde{W}$ of $E$ are said to be compatible if their conjugation maps commute, i.e $[\sigma,\tilde{\sigma}]=0$.  \end{defn}

For two compatible real forms $W$ and $\widetilde{W}$ of $E$ we may write: $$W=(W\cap\widetilde{W})\oplus (W\cap i\widetilde{W}) \ \ and \ \ \widetilde{W}=(W\cap\widetilde{W})\oplus (\widetilde{W}\cap iW).$$

In the case of Lie algebras the real forms will have conjugation maps which are also real Lie homomorphisms. As an example consider the real forms $\mathfrak{o}(p,q)$ and $\mathfrak{o}(\tilde{p},\tilde{q})$ embedded into $\mathfrak{o}(n,\mathbb{C})$ with $n=p+q=\tilde{p}+\tilde{q}$, with corresponding conjugation maps: $$X\mapsto -I_{p,q}\bar{X}I_{p,q}, \ \ \ \ X\mapsto -I_{\tilde{p},\tilde{q}}\bar{X}I_{\tilde{p},\tilde{q}}.$$ It is easy to see that $\mathfrak{o}(p,q)$ is compatible with $\mathfrak{o}(\tilde{p},\tilde{q})$, and also observe that a Cartan involution for both real forms may be chosen to be $X\mapsto \bar{X}$, i.e the Cartan involutions also commute, and we may choose the compact real form: $$\mathfrak{o}(n)=\{X\in \mathfrak{o}(n,\mathbb{C})|X=\bar{X}\},$$ which will be compatible with both $\mathfrak{o}(p,q)$ and $\mathfrak{o}(\tilde{p},\tilde{q})$. 
This means that if $$\mathfrak{o}(p,q)=\mathfrak{t}\oplus \mathfrak{p}, \ \ \ \mathfrak{o}(\tilde{p},\tilde{q})=\tilde{\mathfrak{t}}\oplus\tilde{\mathfrak{p}},$$ denotes the Cartan decompositions then we have: $$\mathfrak{o}(n)=\mathfrak{t}\oplus i\mathfrak{p}=\tilde{\mathfrak{t}}\oplus i\tilde{\mathfrak{p}}.$$

We shall refer to such a triple: $\Big{(}\mathfrak{o}(p,q),\mathfrak{o}(\tilde{p},\tilde{q}), \mathfrak{o}(n)\Big{)}$, as a \textsl{compatible triple} of real forms. We define this not only for Lie algebras, but also for general real slices of the same dimension:

\begin{defn} Let $W$ and $\widetilde{W}$ be real slices of $(E,g)$. Assume they are both real forms of $W^{\C}\subset (E,g)$. Let $V$ be another real slice of $E$, and a real form of $W^{\C}$, with Euclidean signature. Suppose $W,\widetilde{W}$ and $V$ are pairwise compatible, then a triple: $(W,\widetilde{W}, V)$, will be called a \emph{compatible triple}.  \end{defn}

Note that a compatible triple $(W,\widetilde{W}, V)$, implies that we may choose Cartan involutions of $W,\widetilde{W}$ and $V$ which commute, this is by construction. We shall often refer to $V$ as a \emph{compact real slice} of $E$. In the case of $W\cap\tilde{W}=0$, we note that $W=i\widetilde{W}$, so this corresponds to an anti-isometry, i.e., changing the metric from: $g\mapsto -g$, a standard example is the compatible triple: $\Big{(}i\R\oplus \mathbb{R}, \R\oplus i\R, \R^2\Big{)}$, in $(\C^2, g)$, with $g(-,-)$ the standard holomorphic inner product. 
However there exist compatible triples not of this form, i.e., with $W\cap\widetilde{W}\neq 0$, and to find such examples, it is sufficient to look at compatible triple of Lie algebras. In fact, we may say something  stronger in the case of a compatible triple of semi-simple Lie algebras.

Indeed we now show that if we have a compatible triple of semi-simple Lie algebras $\left(\mathfrak{g},\tilde{\mathfrak{g}},\mathfrak{u}\right)$ with $\mathfrak{u}$ compact like in the example above, then the compact/non-compact parts of the Cartan decompositions of the real forms must intersect. We denote $\mathfrak{t}$ (respectively $\tilde{\mathfrak{t}}$) for the compact part, and $\mathfrak{p}$ (respectively $\tilde{\mathfrak{p}}$) for the non-compact part. This is clear if $\mathfrak{g}=\tilde{\mathfrak{g}}$, so assume they are not equal nor isomorphic.

\begin{prop} \label{i} Assume $\mathfrak{g}\ncong \tilde{\mathfrak{g}}$. We have $\mathfrak{t}\cap\tilde{\mathfrak{t}}\neq0$, and if none of the real forms are compact and they are both simple then also $\mathfrak{p}\cap\tilde{\mathfrak{p}}\neq0$. \end{prop}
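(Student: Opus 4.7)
My plan is to work inside the compact form $\mathfrak{u}$, where commuting involutions make the analysis clean. Letting $\sigma, \tilde\sigma, \tau$ be the conjugations on $\mathfrak{g}^{\C}$ with fixed sets $\mathfrak{g}, \tilde{\mathfrak{g}}, \mathfrak{u}$, the pairwise commutation from the compatible triple produces commuting $\C$-linear involutions $\theta := \sigma\tau$ and $\tilde\theta := \tilde\sigma\tau$ of $\mathfrak{g}^{\C}$ that preserve $\mathfrak{u}$; these are the Cartan involutions extended complex-linearly. The joint $(\theta,\tilde\theta)$-eigenspace decomposition
$$\mathfrak{u} = \mathfrak{u}_{++} \oplus \mathfrak{u}_{+-} \oplus \mathfrak{u}_{-+} \oplus \mathfrak{u}_{--}$$
gives $\mathfrak{t} = \mathfrak{u}_{++} \oplus \mathfrak{u}_{+-}$, $\tilde{\mathfrak{t}} = \mathfrak{u}_{++} \oplus \mathfrak{u}_{-+}$, $i\mathfrak{p} = \mathfrak{u}_{-+} \oplus \mathfrak{u}_{--}$, $i\tilde{\mathfrak{p}} = \mathfrak{u}_{+-} \oplus \mathfrak{u}_{--}$, so $\mathfrak{t} \cap \tilde{\mathfrak{t}} = \mathfrak{u}_{++}$ and $i(\mathfrak{p} \cap \tilde{\mathfrak{p}}) = \mathfrak{u}_{--}$. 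The two claims amount to $\mathfrak{u}_{++} \neq 0$ and $\mathfrak{u}_{--} \neq 0$.

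For the first, I argue by contradiction: if $\mathfrak{u}_{++} = 0$ then $[\mathfrak{t},\mathfrak{t}] = [\mathfrak{u}_{+-},\mathfrak{u}_{+-}] \subset \mathfrak{u}_{++} = 0$ makes $\mathfrak{t}$ abelian (and symmetrically $\tilde{\mathfrak{t}}$). For a semisimple real Lie algebra, abelianness of the maximal compact subalgebra rules out compact simple factors (whose $\mathfrak{t}$ is the whole algebra, nonabelian) and forces each non-compact simple factor to be $\mathfrak{sl}(2,\R)$, the unique simple non-compact real form with abelian maximal compact. Thus $\mathfrak{g} \cong \mathfrak{sl}(2,\R)^n$ and $\tilde{\mathfrak{g}} \cong \mathfrak{sl}(2,\R)^m$, with $n=m$ pinned by the common complexification, giving $\mathfrak{g} \cong \tilde{\mathfrak{g}}$ and contradicting the hypothesis.

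For the second, I first note that simplicity of $\mathfrak{g}, \tilde{\mathfrak{g}}$ together with $\mathfrak{g} \ncong \tilde{\mathfrak{g}}$ force $\mathfrak{g}^{\C}$ to be simple (a reducible $\mathfrak{g}^{\C}$ would split as two conjugate simples, making both real forms the underlying real algebra of the same complex simple and hence isomorphic), so $\mathfrak{u}$ is simple. Suppose $\mathfrak{u}_{--} = 0$. Since $\theta \neq \tilde\theta$, the product $\theta\tilde\theta$ is a nontrivial involution of $\mathfrak{u}$ with fixed subalgebra $\mathfrak{u}^{\theta\tilde\theta} = \mathfrak{u}_{++} \oplus \mathfrak{u}_{--} = \mathfrak{u}_{++}$. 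Thus $(\mathfrak{u}, \theta\tilde\theta)$ is an irreducible compact symmetric pair, and its isotropy representation on the $(-1)$-eigenspace $\mathfrak{u}_{+-} \oplus \mathfrak{u}_{-+}$ is irreducible. But $\mathfrak{u}_{+-} = i\tilde{\mathfrak{p}}$ and $\mathfrak{u}_{-+} = i\mathfrak{p}$ are both $\mathfrak{u}_{++}$-stable submodules, and both nonzero by the non-compactness hypotheses, contradicting irreducibility.

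The hard part is the dependence on two classification-type inputs: identifying $\mathfrak{sl}(2,\R)$ as the unique simple non-compact real form with abelian $\mathfrak{t}$ (first step), and the classical fact that the isotropy representation of an irreducible compact symmetric pair is irreducible (second step, cf.\ Helgason). A more intrinsic argument, perhaps via a rank/Borel--de Siebenthal-type statement for fixed subalgebras under commuting involutions, would be aesthetically preferable but does not appear immediately available.
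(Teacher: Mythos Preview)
Your proof is correct. The first claim is handled exactly as in the paper: from $\mathfrak{u}_{++}=0$ you deduce $[\mathfrak{t},\mathfrak{t}]\subset\mathfrak{u}_{++}=0$, so $\mathfrak{t}$ (and symmetrically $\tilde{\mathfrak{t}}$) is abelian, and then invoke that $\mathfrak{sl}(2,\R)$ is the unique non-compact simple real form with abelian maximal compact. Your bigrading $\mathfrak{u}=\mathfrak{u}_{++}\oplus\mathfrak{u}_{+-}\oplus\mathfrak{u}_{-+}\oplus\mathfrak{u}_{--}$ is just a cleaner bookkeeping for the same containments the paper writes out by hand (e.g.\ $\mathfrak{t}\subset i\tilde{\mathfrak{p}}$).

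For the second claim your route genuinely differs from the paper's. The paper argues directly with brackets: from $\mathfrak{p}\cap\tilde{\mathfrak{p}}=0$ one gets $\tilde{\mathfrak{p}}\subset i\mathfrak{t}$, then simplicity gives $\tilde{\mathfrak{t}}=[\tilde{\mathfrak{p}},\tilde{\mathfrak{p}}]\subset\mathfrak{t}$ and symmetrically $\mathfrak{t}\subset\tilde{\mathfrak{t}}$, whence $\mathfrak{t}=\tilde{\mathfrak{t}}$; but then $\tilde{\mathfrak{p}}\subset i\tilde{\mathfrak{t}}$, forcing $\tilde{\mathfrak{p}}=0$ and contradicting non-compactness. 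You instead observe that $\mathfrak{u}_{--}=0$ makes $(\mathfrak{u},\theta\tilde\theta)$ an irreducible compact symmetric pair (using that $\mathfrak{g}\ncong\tilde{\mathfrak{g}}$ with both simple forces $\mathfrak{g}^{\C}$, hence $\mathfrak{u}$, simple), and then the irreducibility of the isotropy representation on $\mathfrak{u}_{+-}\oplus\mathfrak{u}_{-+}$ is contradicted by the two nonzero $\mathfrak{u}_{++}$-stable summands $\mathfrak{u}_{+-}=i\tilde{\mathfrak{p}}$ and $\mathfrak{u}_{-+}=i\mathfrak{p}$. The paper's argument is more self-contained, needing only $[\mathfrak{p},\mathfrak{p}]=\mathfrak{t}$ for simple non-compact real forms; yours imports a standard structural fact from Helgason but gives a more conceptual picture of why the obstruction arises.
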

\begin{proof} We may assume none of the real forms are compact, because then the first part is trivial. Suppose that $\mathfrak{t}\cap\tilde{\mathfrak{t}}=0$, then it is easy to check that $\mathfrak{t}\subset i\tilde{\mathfrak{p}}$ and $\tilde{\mathfrak{t}}\subset i\mathfrak{p}$. Indeed if $x\in \mathfrak{t}$ then because $\mathfrak{g}\cap\tilde{\mathfrak{g}}=\mathfrak{p}\cap\tilde{\mathfrak{p}}$ then $x=p+i(\tilde{t}+\tilde{p})$ for suitable $p, \tilde{p}\in \tilde{\mathfrak{p}}$ and $\tilde{t}\in \tilde{\mathfrak{t}}$. But then, $$x-i\tilde{p}=p+i\tilde{t}\in \mathfrak{u}\cap i\mathfrak{u},$$ and consequently $x=i\tilde{p}$. The case $\tilde{\mathfrak{t}}\subset i\mathfrak{p}$ is similar. But then $[\mathfrak{t},\mathfrak{t}]\subset \tilde{\mathfrak{t}}\cap \mathfrak{t}$, i.e must be zero, and similarly $[\tilde{\mathfrak{t}},\tilde{\mathfrak{t}}]\subset \tilde{\mathfrak{t}}\cap \mathfrak{t}$. So we conclude that $\tilde{\mathfrak{t}}$ and $\mathfrak{t}$ must be abelian. Now the only simple Lie algebra with abelian compact part is $\mathfrak{sl}(2,\mathbb{R})$, i.e it follows that $$\mathfrak{g}\cong\tilde{\mathfrak{g}}\cong \oplus^k_j \mathfrak{sl}(2,\mathbb{R}),$$ for a suitable $k.$ But since we assume $\mathfrak{g}$ and $\tilde{\mathfrak{g}}$ are non-isomorphic, then this is a contradiction. Now for the second statement suppose $\mathfrak{p}\cap\tilde{\mathfrak{p}}=0$. Then one easily checks that $\tilde{\mathfrak{p}}\subset i\mathfrak{t}$, and it is a standard result that $[\tilde{\mathfrak{p}},\tilde{\mathfrak{p}}]=\tilde{\mathfrak{t}}$ and $[\mathfrak{p},\mathfrak{p}]=\mathfrak{t}$ using that the real forms are simple, and so therefore $\tilde{\mathfrak{t}}\subset [i\mathfrak{t},i\mathfrak{t}]\subset \mathfrak{t}$. Of course we similarly must have $\mathfrak{p}\subset i\tilde{\mathfrak{t}}$, so we conclude that $\mathfrak{t}=\tilde{\mathfrak{t}}$. Hence $\mathfrak{g}\cap\tilde{\mathfrak{g}}=\mathfrak{t}=\tilde{\mathfrak{t}}$. However this will require $\tilde{\mathfrak{p}}\subset i\mathfrak{t}=i\tilde{\mathfrak{t}}$, proving that $\tilde{\mathfrak{p}}=0$. Hence $\tilde{\mathfrak{g}}$ is compact, which contradicts our assumptions. The proposition is proved.  
\end{proof}

\section{Holomorphic Riemannian manifolds}

\subsection{Complexification of real manifolds} 
We will now consider the case where we have a real pseudo-Riemannian manifold. The aim is to consider analytic continuations of such via its complexification and it is thus necessary to assume that the manifold is analytic.  We will also assume that the real dimension of the real manifold, and the complex dimension of the complex manifold are equal (unless stated otherwise). 

Let us first start with a few definitions (see \cite{PV}). 
\begin{defn}
Given a complex manifold $M$ with complex Riemannian metric $g$. If a submanifold $N\subset M$ for any point $p\in N$ we have that $T_pN$ is a real slice of $(T_pM,g)$  (in the sense of  Defn. \ref{realslice}), we will call $N$ a real slice of $(M,g)$. 
\end{defn}
This definition implies that the induced metric from $M$ is real valued on $N$. $N$ is therefore a pseudo-Riemannian manifold. This further implies that real slices are totally real manifolds. 

We will also define the notion of Wick-related spaces, Wick-rotated spaces, as well as a standard Wick-rotation. 
\begin{defn}[Wick-related spaces]
Two pseudo-Riemannian manifolds $P$ and $Q$ are said to be \emph{Wick-related} if there exists a holomorphic Riemannian manifold $(M,g)$ such that $P$ and $Q$ are embedded as real slices of $M$. 
\end{defn}
Wick-related spaces was defined in \cite{PV}. However, we also find it useful to define: 
\begin{defn}[Wick-rotation]
	If two Wick-related spaces intersect at a point $p$ in $M$, then we will use the term \emph{Wick-rotation}: the manifold $P$ can be Wick-rotated to the manifold $Q$ (with respect to the point $p$).  
\end{defn}

\begin{defn}[Standard Wick-rotation]
	Let the $P$ and $Q$ be Wick-related spaces having a common point $p$. Then if the tangent spaces $T_pP$ and $T_pQ$ are embedded: $T_pP, T_pQ\hookrightarrow (T_pP)^{\C}\cong (T_pQ)^{\C}\hookrightarrow T_p M$ such that they form a compatible triple, then we say that the spaces $P$ and $Q$ are related through a \emph{standard Wick-rotation}. 
\end{defn}

We note in the case where $P$ and $Q$ are Wick-rotated by a standard Wick-rotation, and $Q$ is a real slice of Euclidean signature (i.e., it is a Riemannian space), then the tangent spaces: $T_pP$ and $T_pQ$, can be embedded into $(T_pP)^{\mb{C}}\cong (T_pQ)^{\mb{C}}$, such that they are compatible real forms. Also in the case where both real slices: $P$ and $Q$, are Wick-rotated of the same signatures, then they are also Wick-rotated by a standard Wick-rotation. Indeed we can identify $T_pP\cong T_pQ$ (as symmetric non-degenerate bilinear spaces), and in this case the real slices will be compatible with each other (since they are equal as sets in $(T_pP)^{\mb{C}}$). Moreover there is a natural compact real slice: $W\subset (T_pP)^{\mb{C}}$, which is compatible with $T_pP$.

The following proposition is immediate by definition of a compatible triple:

\begin{prop} Two Wick-rotated spaces $P$ and $Q$ by a standard Wick-rotation gives rise to Cartan involutions of $T_pP$ and $T_pQ$ which commute. $\square$  \end{prop}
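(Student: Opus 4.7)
The plan is to read off the Cartan involutions directly from the compact real slice $V$ guaranteed by the definition of a standard Wick-rotation, and then verify commutativity from the fact that the two Cartan involutions share a common origin on the ambient complexification.

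More concretely, by the definition of standard Wick-rotation we are handed a compatible triple $(T_pP, T_pQ, V)$ inside the complexification $E := (T_pP)^{\C} \cong (T_pQ)^{\C} \subset T_pM$, with $V$ a compact real slice. Let $\sigma_P, \sigma_Q, \sigma_V$ denote the three conjugation maps of $E$ associated to these real forms; by compatibility they pairwise commute. The first step is to define
\[
  \theta_P := \sigma_V\big|_{T_pP}, \qquad \theta_Q := \sigma_V\big|_{T_pQ}.
\]
That these are well-defined involutions \emph{of} $T_pP$ and $T_pQ$ follows from $[\sigma_V,\sigma_P]=0$ (so $\sigma_V$ preserves the fixed-point set of $\sigma_P$) together with $\sigma_V^2=\mathrm{Id}$, and similarly for $Q$.

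Next I would verify that $\theta_P$ is a Cartan involution in the sense of Definition \ref{CartanInv}. Using the compatibility decomposition $T_pP = (T_pP\cap V)\oplus(T_pP\cap iV)$, together with the fact that $\sigma_V$ fixes $V$ pointwise and negates $iV$ (since it is antilinear on $E$), one sees that $\theta_P$ is $+1$ on $T_pP\cap V$ and $-1$ on $T_pP\cap iV$. Since $g|_V$ is positive definite (Euclidean compact slice) and consequently $g|_{iV}$ is negative definite, the bilinear form $g_{\theta_P}(x,y) = g(x,\theta_P y)$ evaluates positively on both summands, hence is an inner product on $T_pP$. The identical argument gives $\theta_Q$ as a Cartan involution of $T_pQ$.

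Finally, commutativity: both $\theta_P$ and $\theta_Q$ are restrictions of the \emph{same} map $\sigma_V$ on $E$. To say they commute, extend each complex-linearly to all of $E$, yielding $\widehat\theta_P$ and $\widehat\theta_Q$. A direct calculation on elements of the form $w+iw'$ with $w,w'\in T_pP$ (resp.\ $T_pQ$) gives $\widehat\theta_P = \sigma_V\sigma_P$ and $\widehat\theta_Q = \sigma_V\sigma_Q$. Then
\[
\widehat\theta_P\widehat\theta_Q = \sigma_V\sigma_P\sigma_V\sigma_Q = \sigma_P\sigma_Q = \sigma_Q\sigma_P = \widehat\theta_Q\widehat\theta_P,
\]
where the middle equalities use $\sigma_V^2=\mathrm{Id}$ and the pairwise commutativity of $\sigma_P,\sigma_Q,\sigma_V$ from the compatible triple. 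The only real subtlety is the bookkeeping in this last step, namely interpreting commutativity of maps defined a priori on different domains by lifting them through the complexification; but once that convention is set, the identity is forced by compatibility and the statement is immediate, in line with the paper's explicit Lie-algebra model where both Cartan involutions are realised as the single map $X\mapsto\bar X$ on $\mathfrak{o}(n,\C)$.
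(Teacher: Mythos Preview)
Your argument is correct and is precisely the unpacking of what the paper means by ``immediate by definition of a compatible triple'' and the earlier remark that a compatible triple ``implies that we may choose Cartan involutions of $W,\widetilde{W}$ and $V$ which commute, this is by construction.'' The paper offers no proof beyond the $\square$, so your construction of $\theta_P=\sigma_V|_{T_pP}$, $\theta_Q=\sigma_V|_{T_pQ}$ and the verification via $\widehat\theta_P=\sigma_V\sigma_P$, $\widehat\theta_Q=\sigma_V\sigma_Q$ is exactly the intended ``by construction'' argument made explicit.
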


These three definitions are of increasing speciality; Wick-related spaces need not intersect at a point $p$; nor is there a guarantee that Wick-rotated spaces have commuting Cartan involutions. This all depends on the way the real forms are imbedded into the complexification $O(n,\C)$.

However, in physics, all examples of Wick-rotations (known to the authors) are standard Wick-rotations in the sense above. 

\subsection{Complex differential geometry}
It is useful to review some of the results from complex differential geometry especially in the holomorphic setting. 

A complex Riemannian manifold is a complex manifold $M$ equipped with a symmetric, $\C$-bilinear, non-degenerate form $g$. A vector field is holomorphic if and only if it has holomorphic component functions with respect to any local complex coordinates. The holomorphic tangent bundle $TM$, can be constructed using the construction $E^{1,0}$ via the complexification of $TM$. Similarly, a tensor field $T$ over the holomorphic tangent bundle is holomorphic if and only if the component functions $T^{\mu_{1}...\mu_{l}}_{\phantom{{\mu_{1}...\mu_{l}}}\nu_1...\nu_k}$ are holomorphic with respect to any local holomorphic coordinates $\{z_1,...,z_n\}$ on $M$. Note also that the sum or tensor multiplication of two holomorphic tensors are holomorphic, so is the contraction of a holomorphic tensor. 

For any complex Riemannian manifold there is a unique Levi-Civita connection $\nabla$ (just as in pseudo-Riemannian case) satisfying:
\beq
1. && [X,Y]=\nabla_XY-\nabla_YX \qquad \text{(torsion-free)}, \\
2. && \nabla_Xg=0,   \qquad \text{(metric compatible)}
\eeq
for all vector fields $X$ and $Y$. 

For a holomorphic metric, the Levi-Civita connection $\nabla$ is also holomorphic (as follows from the Koszul equations), so is the Lie bracket. This implies that the holomorphic Riemann curvature tensor, 
\beq
R(X,Y)Z=\nabla_X\nabla_YZ-\nabla_Y\nabla_XZ-\nabla_{[X,Y]}Z,
\eeq
is also holomorphic. Hence, for a holomorphic metric, the connection, and all the curvature tensors inherit this property: they are all holomorphic. 

We remark that this implies that the standard equations for computing the connection coefficients, Riemann curvature tensors etc. which are known from the pseudo-Riemannian case can be used more or less unaltered for a holomorphic Riemannian manifold. Furthermore, this has profound consequences for us as the complexification of a real pseudo-Riemannian manifold is a holomorphic Riemannian manifold. 
Thus, given a real slice $N\subset M$. Then the curvature tensors of $N$ are uniquely extended to a neighbourhood of $N$ in $M$. 

We also remark that since contractions, tensor products preserve holomorphy, polynomial curvature scalars (as considered in \cite{OP}) are also holomorphic and is uniquely determined by knowing them on a real slice. Consequently, we get the following result: If a pseudo-Riemannian space $N$ is obtained from a pseudo-Riemannian space $P$ by Wick-rotation w.r.t. a point $p$, then their polynomial curvature invariants match at $p$. 

Thus, we note the following important facts ($M$ is the ambient holomorphic complex Riemannian manifold) : 
\begin{enumerate}
\item{} For two Wick-related spaces, all the Riemannian curvature tensors can be obtained from $M$ by restricting to the real slices. 
\item{} If a curvature tensor is identically zero for a pseudo-Riemannian manifold, $N$, then it is identically zero in a neighbourhood of $N$ in $M$. 
\end{enumerate}

\subsection{Real slices from a frame-bundle perspective} \label{FB}
In the frame-bundle formulation of differential geometry, the Riemannian case is a frame-bundle with an $O(n)$ structure group. In general, the pseudo-Riemannian case, has a $O(p,q)$ structure group. As we saw earlier, holomorphic Riemannian geometry has $O(n,\C)$ structure group. The relation between the real slices (with structure group $O(p,q)$) and the holomorphic Riemannian case is related through the complexification of $O(p,q)^\C\cong O(n,\C)$. 

In the Wick-rotated case, at the intersection point the different real slices with structure groups $O(p,q)$ and $O(\tilde{p},\tilde{q})$ will both be embedded in $O(n,\C)$. Indeed if $P$ and $Q$ are Wick-rotated at a point $p\in M$ of the same real dimension, say $n$, then $T_pP$ and $T_pQ$ are real slices of $T_pM$, of say signatures $(p,q)$ and $(\tilde{p},\tilde{q})$ respectively. Now since these are totally real spaces, we have natural embeddings of $$(T_pP)^{\C}\hookrightarrow T_pM, \ and \ (T_pQ)^{\C}\hookrightarrow T_pM.$$ We can restrict the metric $g$ on $T_pM$ to $(T_pP)^{\mb{C}}$ and $(T_pQ)^{\mb{C}}$ so they become holomorphic inner product subspaces of $T_pM$. In particular since we have an isomorphism: $(T_pQ)^{\C}\xrightarrow{\psi} (T_pP)^{\C}$ (as holomorphic inner product spaces), then we have natural embeddings: $$T_pP, T_pQ\hookrightarrow (T_pP)^{\C}\subset T_pM,$$ as real slices of $(T_pP)^{\mb{C}}$, with their signatures: $(p,q)$ and $(\tilde{p},\tilde{q})$, respectively. Moreover when restricting to $(T_pP)^{\C}$ on the holomorphic metric $g$ on $M$, gives another holomorphic inner product, with structure group: $$O(n,\C):=\{(T_pP)^{\C}\xrightarrow{f} (T_pP)^{\C}|g(f(-),f(-))=g(-,-)\}.$$ The pseudo-orthogonal groups: $O(p,q)$ (structure group of $P$) and $O(\tilde{p},\tilde{q})$ (structure group of $Q$), will now be embedded as real forms via $\psi$ into $O(n,\mb{C})$.

\

A tensor $x$ over the point $p\in P\cap Q$ w.r.t $P$ will therefore be considered as a vector $x\in V$ for some appropriate vector space, and similarly a tensor $\tilde{x}$ w.r.t to $Q$ over the same point $p$ will be in another real form $\tilde{V}\subset V^{\mathbb{C}}$. This could be, for example, the Riemann tensor or covariant derivatives of the Riemann tensor, restricted to the point. If the two spaces are Wick-rotated the orbits $Gx$, and $\tilde{G}\tilde{x}$ where $G:=O(p,q)$ and $\tilde{G}:=O(\tilde{p},\tilde{q})$, are embedded into the same complex orbit $G^{\C}x\cong G^\C\tilde{x}$, for $G^{\mathbb{C}}:=O(n,\C)$. Hence, we have the two embeddings: 
\[
\begin{CD}
O(n,{\C})\cdot x@=	O(n,{\C})\cdot\tilde{x} \\
@AAA @AAA \\
O(p,q)\cdot x  @.  O(\tilde{p},\tilde{q})\cdot\tilde{x}
\end{CD}
\]
A necessary condition for the existence of Wick-rotated $x$ and $\tilde{x}$ is therefore the existence of a complex orbit in which both real orbits are embedded. In the case of a standard Wick-rotation we know that the tangent spaces $T_pP$ and $T_pQ$ form a compatible triple with a compact real slice, say, $W$ (a real form of $(T_pP)^{\mb{C}}$ of Euclidean signature w.r.t $g$), when embedded into $(T_pP)^{\C}\subset T_pM$. So w.r.t $W$, there is a compact real form: $O(n)$, also embedded in $O(n,\mb{C})$ (as above). Denote now $\mathfrak{o}(p,q)$, $\mathfrak{o}(\tilde{p},\tilde{q})$ and $\mathfrak{o}(n)$, for the real forms (of Lie algebras) of $O(p,q), O(\tilde{p},\tilde{q})$ and $O(n)$ respectively, embedded into $\mathfrak{o}(n,\mb{C})$ (the Lie algebra of $O(n,\mb{C})$) w.r.t a standard Wick-rotation. Then we have the following observation:

\begin{lem}\label{c} The triple of real forms: $\Big{(} \mathfrak{o}(p,q), \mathfrak{o}(\tilde{p},\tilde{q}), \mathfrak{o}(n)\Big{)}$, embedded into $\mathfrak{o}(n,\C)$ under a standard Wick-rotation is also a compatible triple of Lie algebras.     \end{lem}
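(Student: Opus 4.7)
The plan is to reduce the assertion for Lie algebras to the hypothesis we already have on the underlying vector spaces. By the definition of a standard Wick-rotation, the three real slices $T_pP$, $T_pQ$ and $W$ (the compact slice) form a compatible triple of real forms of $V := (T_pP)^{\mathbb{C}}\cong (T_pQ)^{\mathbb{C}}$; equivalently, their associated conjugation maps $\sigma_1,\sigma_2,\sigma_3 : V\to V$ commute pairwise. I will transfer this property to the induced conjugations on $\mathfrak{o}(n,\mathbb{C}) = \mathfrak{o}(V,g)$.

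First I would spell out explicitly how a real form of $V$ induces a real form of $\mathfrak{o}(V,g)$. If $U\subset V$ is a real slice with associated conjugation $\sigma$, then the pseudo-orthogonal Lie algebra $\mathfrak{o}(U,g|_U)$ sits inside $\mathfrak{o}(V,g)$ as the fixed set of the map
\[
\tau_\sigma : \mathfrak{o}(V,g)\longrightarrow \mathfrak{o}(V,g),\qquad \tau_\sigma(X) = \sigma\circ X\circ \sigma^{-1}.
\]
One checks that $\tau_\sigma$ is a well-defined antilinear involution of the complex Lie algebra $\mathfrak{o}(V,g)$: it is $\mathbb{C}$-antilinear because $\sigma$ is, it is involutive since $\sigma^2 = \mathrm{Id}$, and it respects the Lie bracket since conjugation by a fixed map $\sigma$ is an automorphism of $\mathrm{End}(V)$. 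Finally, $\tau_\sigma(X) = X$ means exactly that $X$ commutes with $\sigma$, which is equivalent to saying that $X$ preserves the $(+1)$-eigenspace $U$ of $\sigma$; hence the fixed-point set of $\tau_\sigma$ is $\mathfrak{o}(U,g|_U)$, as required. Applying this with $U = T_pP$, $T_pQ$, $W$ gives conjugation maps $\tau_1,\tau_2,\tau_3$ on $\mathfrak{o}(n,\mathbb{C})$ associated to the real forms $\mathfrak{o}(p,q)$, $\mathfrak{o}(\tilde p,\tilde q)$ and $\mathfrak{o}(n)$.

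The compatibility of the triple of Lie algebras then reduces to a one-line verification. For any $X\in\mathfrak{o}(n,\mathbb{C})$ and any $i,j\in\{1,2,3\}$,
\[
\tau_i\tau_j(X) = \sigma_i\sigma_j X\sigma_j^{-1}\sigma_i^{-1} = (\sigma_i\sigma_j) X(\sigma_i\sigma_j)^{-1},
\]
so from $\sigma_i\sigma_j = \sigma_j\sigma_i$ it follows immediately that $\tau_i\tau_j = \tau_j\tau_i$. Thus the three conjugation maps on $\mathfrak{o}(n,\mathbb{C})$ pairwise commute, which by definition means that $\big(\mathfrak{o}(p,q),\mathfrak{o}(\tilde p,\tilde q),\mathfrak{o}(n)\big)$ is a compatible triple of real forms of $\mathfrak{o}(n,\mathbb{C})$.

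The only non-bookkeeping step is the identification of the correct induced conjugation $\tau_\sigma$ on $\mathfrak{o}(V,g)$ (in particular checking that $\tau_\sigma$ is $\mathbb{C}$-antilinear and its fixed set is $\mathfrak{o}(U,g|_U)$); that is where I would be most careful. Once this is pinned down, the commutation statement is automatic from the hypothesis on $\sigma_1,\sigma_2,\sigma_3$, and no further structural input is needed.
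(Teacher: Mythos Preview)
Your proof is correct and follows essentially the same route as the paper's. The paper writes the induced conjugation on $\mathfrak{o}(n,\C)$ as $f\mapsto \sigma\circ\bar f$ with $\bar f(v_1+iv_2):=f(v_1)-if(v_2)$, which, since $\bar f=f\circ\sigma$, is exactly your $\tau_\sigma(X)=\sigma X\sigma^{-1}$; from there both arguments conclude by observing that pairwise commutation of $\sigma_1,\sigma_2,\sigma_3$ forces pairwise commutation of the induced maps.
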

\begin{proof} Denote $\sigma$ for the conjugation map of $T_pP$ and $\tilde{\sigma}$ for the conjugation map of $T_pQ$, and let $\tau$ be the conjugation map of $W$. We note that the map: $$\mathfrak{o}(n,\C)\rightarrow\mathfrak{o}(n,\C), \ \ f\mapsto \sigma\circ \bar{f},$$ with $\bar{f}(v_1+iv_2):=f(v_1)-if(v_2), \forall v_1,v_2\in \mathfrak{o}(p,q)$, is a conjugation map with fixed points $\mathfrak{o}(p,q)$, this is easy to check. Similarly by replacing $\sigma$ with $\tilde{\sigma}$ and $\tau$ we get conjugation maps associated to $\mathfrak{o}(\tilde{p},\tilde{q})$ and $\mathfrak{o}(n)$. Note that since the conjugation maps: $\sigma, \tilde{\sigma}$ and $\tau$ all commute, implies that the conjugation maps associated to $\mathfrak{o}(p,q), \mathfrak{o}(\tilde{p},\tilde{q})$ and $\mathfrak{o}(n)$ must also commute. The lemma is proved. \end{proof}

\section{Lie groups}

An important class of examples can be found for Lie groups. Lie groups are analytic manifolds and can be equipped with left-invariant metrics of arbitrary signature (at least locally, ignoring the question of global geodesic completeness).  

\subsection{Complex Lie groups and their real forms}

\begin{defn}\label{real} A real Lie subgroup $G$ of a complex Lie group $G^{\C}$ is said to be a \emph{real form} if $\mathfrak{g}$ is a real form of the Lie algebra of $G^{\C}$, and moreover as a group product we have $G^{\C}=GG^{\C}_0$ where $G^{\C}_0$ is the identity component.  \end{defn}

Given a real Lie group, $G$ (which is an analytic manifold), we can complexify the Lie group using the Lie algebra, $\mathfrak{g}$. Since the identity component of the Lie group is determined by a neighbourhood of the identity, the exponential map -- mapping the Lie algebra onto a neighbourhood of the identity -- enables us to complexify the Lie group. The complexified Lie group $G^{\C}$ is then a Lie group of real dimension twice that of the real one: $\dim_{\R}(G^{\C})=2\dim_{\R}(G)$. 

For two real forms $\mathfrak{g}$ and $\tilde{\mathfrak{g}}$ of a complex Lie algebra $\mathfrak{g}^\C$, we can use the exponential map to create embeddings: 
\beq
\begin{CD}
\mathfrak{g} @>>> \mathfrak{g}^\C @<<< \tilde{\mathfrak{g}} \\
@V{\exp}VV @V{\exp}VV @VV{\exp}V \\
G @>>> G^\C @<<< \tilde{G}
\end{CD}
\eeq
Hence, since the exponential map is a holomorphic map, equipping the group $G^\C$ a holomorphic metric, this can be considered as a holomorphic Riemannian manifold. 

Consider therefore a complex Lie group $G^\C$. How can be find possible real forms of this Lie group? Henceforth, we will consider the semi-simple groups having semi-simple real forms. This case was completely classified by Cartan and the existence of real forms hinges on the existence of a \emph{Cartan involution} $\theta$ which is a map $\theta:\mathfrak{g}\rightarrow\mathfrak{g}$, see Def. \ref{CartanInv} and paragraph below.
Such a Cartan involution is unique up to inner automorphisms. 

Consider two real forms (of same dimension) of some complex Lie group $G^{\C}$. Then these necessarily have the unit element in common since the unit element is unique. Hence these real forms are necessarily Wick-rotated with respect to each other. Since they share the unit element, and their corresponding Lie algebras are tangent spaces over the unit element, it makes sense to compare their Cartan involutions. Assume the real forms have Lie algebras $\mathfrak{g}$ and $\mathfrak{g}'$, with corresponding Cartan involutions $\theta$ and $\theta'$, respectively. These two Cartan involutions do not necessarily commute, i.e., $[\theta,\theta']\neq 0$, but they would commute if the real forms are related through a standard Wick-rotation. 

\

Important examples are:
\begin{enumerate}
\item{} The real forms $O(p,q)$ of $O(n,\C)$, where $n=p+q$. 
\item{} The real forms $SU(p,q)$, and $SL(n,\R)$ of $SL(n,\C)$.
\item{} The real forms $Sp(p,q)$	of $Sp(n,\C)$.
\item{} The real forms $G_2$ (compact) and split-$G_2$ of $G_2^\C$. 
\end{enumerate}
 Explicitly, the group $SU(2)$, which can be parameterised by the product of matrices
 \beq \label{su2} \begin{bmatrix}
 	e^{ix_1} & 0 \\
 	0 & e^{-ix_1}
 \end{bmatrix}
 \begin{bmatrix}
 	\cosh(ix_2) & \sinh(ix_2) \\
 	\sinh(ix_2) & \cosh(ix_2)
 \end{bmatrix}
 \begin{bmatrix}
 	\cos(x_3) & \sin(x_3) \\
 	-\sin(x_3) & \cos(x_3)
 \end{bmatrix}
\eeq
By holomorphic extension, $x_k\mapsto z_k=x_k+iy_k$, and then restricting to the real section $(z_1,z_2,z_3)=(iy_1,iy_2,x_3)$ we obtain the following parameterisation of $SL(2,\R)$:
 \beq\label{sl2}
 \begin{bmatrix}
 	e^{-y_1} & 0 \\
 	0 & e^{y_1}
 \end{bmatrix}
 \begin{bmatrix}
 	\cosh(y_2) & -\sinh(y_2) \\
 	-\sinh(y_2) & \cosh(y_2)
 \end{bmatrix}
 \begin{bmatrix}
 	\cos(x_3) & \sin(x_3) \\
 	-\sin(x_3) & \cos(x_3)
 \end{bmatrix}.
\eeq
In this example, the groups $SL(2,\R)$ and $SU(2)$ are related through a standard Wick-rotation. 

\subsection{Example: Split $G_2$-holonomy manifolds}

As an example, let us construct a pseudo-Riemannian split-$G_2$-holonomy manifold from a known Riemannian $G_2$-holonomy manifold \cite{GPP}. 

Let us assume that the metrics is of dimension 7 with $S^3\times S^3$ hypersurfaces. Let us first see how we can use the Killing form to construct an Einstein metric. Since $S^3\cong SU(2)$, we can equip these hypersurfaces with a left-invariant Riemannian metric proportional to the Killing form, $\kappa$, on each factor, i.e., using the left invariant frame, $h(X,Y)=-\lambda \kappa_{S^3\times S^3}(X,Y)$, $\lambda>0$. Since the Killing form is non-degenerate and negative-definite on a compact semi-simple group, the metric $h$ is positive definite. We can now do a Wick-rotation of these Lie groups as explained above to another real form of $SU(2)^\C\cong SL(2,\C)$. The other real form of this complex group is $SL(2,\R)$. The Killing form will like-wise be Wick-rotated to the corresponding form of $SL(2,\R)$. The Killing form of $SL(2,\R)$ is also non-degenerate but of signature $(-++)$. The Wick-rotated form of $h$ will therefore be:
\[\tilde{h}(X,Y)=-\lambda \kappa_{SL(2,\R)\times SL(2,\R)}(X,Y), \] 
which is also non-degenerate but of signature $(----++)$ or (4,2). This would be the standard bi-invariant Einstein metric on $SL(2,\R)\times SL(2,\R)$. 

With some amendments we can also provide with a Wick-rotation of the metrics in \cite{GPP} to a 7-dimensional pseudo-Riemannian manifold of signature $(4,3)$. Let $\sigma^i$ and $\Sigma^i$ be  a set of left-invariant one-forms on two copies of $SU(2)$ so that $\d\sigma^1=-\sigma^2\wedge\sigma^3$  and $\d\Sigma^1=-\Sigma^2\wedge\Sigma^3$ etc. (cyclic permutation). For simplicity, we write $\mbold{\sigma}$ and $\mbold{\Sigma}$ for the column vectors of one-forms:
\[ \mbold{\sigma} =\begin{bmatrix} \sigma^1 \\ \sigma^2 \\ \sigma^3 \end{bmatrix},\qquad  \mbold{\Sigma}= \begin{bmatrix} \Sigma^1 \\ \Sigma^2 \\ \Sigma^3 \end{bmatrix}. \] 
Using the parameterisation eq.(\ref{su2}), the (real) left-invariant one-forms can be written: 
\beq \sigma^1&=&2\left[\cos(2x_3)\d x_2+\cos(2x_2)\sin(2x_3)\d x_1\right]\nonumber \\
\sigma^2&=&2\left[-\sin(2x_3)\d x_2 +\cos(2x_2)\cos(2x_3)\d x_1\right]\nonumber \\
\sigma^3&=&2\left[\d x_3-\sin(2x_2)\d x_1\right],\nonumber
\eeq
 and similar for ${\mbold\Sigma}$.
A $G_2$-holonomy metric can now be written \cite{GPP}: 
\beq
ds^2=\alpha^2\d r^2+\beta^2\left(\mbold\sigma-\mbold A\right)^T\left(\mbold\sigma-\mbold A\right)+\gamma^2{\mbold\Sigma}^T{\mbold\Sigma},
\eeq
where $\alpha=\alpha(r)$, $\beta=\beta(r)$, $\gamma=\gamma(r)$, and the ${\mbold A}$ is the connection one-form ${\mbold A}=\tfrac 12{\mbold\Sigma}$. When
\[ \alpha^2=(1-r^{-3} )^{-1}, ~\beta^2 = \frac 19 r^2 (1-r^{- 3} ), ~\gamma^2=\frac{1}{12}r^2,\] 
this metric is Ricci-flat and has $G_2$-holonomy. 

A Wick-rotation can now be accomplished by Wick-rotating each copy of $S^3$ to $SL(2,\R)$ given explicitly by eqs.(\ref{su2}-\ref{sl2}). By aligning the Wick-rotation for each copy, so that $\mbold{\sigma}$ and $\mbold{\Sigma}$ transform identically, then the metric above can be Wick-rotated into: 
\beq
ds^2=\alpha^2\d r^2+\beta^2\left(\tilde{\mbold\sigma}-\tilde{\mbold A}\right)^T{\mbold \eta}\left(\tilde{\mbold\sigma}-\tilde{\mbold A}\right)+\gamma^2\tilde{\mbold\Sigma}^T{\mbold \eta}\tilde{\mbold\Sigma},
\eeq
where $\tilde{\mbold\sigma}$ and $\tilde{\mbold\Sigma}$ are the corresponding real left-invariant one-forms on two copies of $SL(2,\R)$, $\tilde{\mbold A}=\frac 12\tilde{\mbold\Sigma}$ and ${\mbold \eta}=\text{diag}(-1,-1,1)$. It is clearly essential here that the one-form $(\tilde{\mbold\sigma}-\tilde{\mbold A)}$ is real on $SL(2,\R)\times SL(2,\R)$ but by aligning the Wick-rotations on each copy of $SU(2)$ this can be accomplished. The Wick-rotated metric is therefore Ricci-flat and of signature $(4,3)$. 
As the holonomy group is generated by the Riemann curvature tensor (and its covariant derivatives), the holonomy group of the Wick-rotated space would be another real form of the complexified $G_2^{\C}$ Lie group. Since the only other real form is the split-$G_2$, then the resulting space is of split-$G_2$ holonomy.

\section{A standard Wick-rotation to a real Riemannian space} \label{riemannian}

In Section \ref{FB} we saw a necessarily condition put on the orbits $O(p,q)x$ and $O(\tilde{p},\tilde{q})\tilde{x}$, for the existence of two Wick-rotated spaces $P$ and $Q$ of $M$. We now give a stronger necessary condition on the orbits in the case of a standard Wick rotation to a real Riemannian space. In the Riemannian case we have $\tilde{p}=n$ and $\tilde{q}=0$, so one of the structure groups is a compact real form: $O(n)$, of $O(n,\C)$. 

We will use tools from real GIT applied to semi-simple groups to obtain a necessary condition on the orbits.

\subsection{Minimal vectors and closure of real semi-simple orbits} 

Throughout this section, our groups will always be semi-simple of finitely many components.

\

Let now $G$ be a real semi-simple Lie group with finitely many components, and assume it is a real form of a complex Lie group $G^{\mathbb{C}}$. This immediately implies that our groups are all linear. Suppose $G\xrightarrow{\rho} GL(V)$ is a representation of $G$. We shall say that a complex representation $G^{\mathbb{C}}\xrightarrow{\rho^{\mathbb{C}}} GL(V^{\mathbb{C}})$ is a \textsl{complexified action} of $\rho$ if the following diagram commutes:

\beq
\begin{CD}
G^{\mathbb{C}} @>\rho^{\mathbb{C}}>> GL(V^{\mathbb{C}}) \\
@A{i}AA @A{i}AA \\
G @>\rho>> GL(V)
\end{CD}
\eeq

Let now $\mathfrak{g}\xrightarrow{\theta}\mathfrak{g}$ be a Cartan involution of $\mathfrak{g}$ and $G\xrightarrow{\Theta} G$ be the corresponding unique Cartan involution of $G$ which lifts $\theta$, i.e $d\Theta=\theta$. Denote the Cartan decomposition of $\mathfrak{g}=\mathfrak{t}\oplus \mathfrak{p}$, and similarly let $K\subset G$ be the maximally compact subgroup with Lie algebra $\mathfrak{t}$, so we have Cartan decomposition of $G=Ke^{\mathfrak{p}}$. Let $\mathfrak{u}:=\mathfrak{t}\oplus i\mathfrak{p}$, then $\mathfrak{u}$ is a compact real form of $\mathfrak{g}^{\mathbb{C}}$ which is compatible with $\mathfrak{g}$. As a real Lie group denote the Cartan decomposition $G^{\mathbb{C}}=Ue^{i\mathfrak{u}}$, where $U$ is a compact real form of $G^{\mathbb{C}}$ with Lie algebra $\mathfrak{u}$. Note that $K\subset U$, in this way.

We can choose a $K$-invariant inner product $\langle-,-\rangle$ on $V$ w.r.t the action $\rho$, which is compatible with a $U$-invariant Hermitian inner product $\langle-,-\rangle_{\mathbb{C}}$ on $V^{\mathbb{C}}$ w.r.t $\rho^{\mathbb{C}}$. The inner product $\langle-,-\rangle$ can be chosen such that it has the two properties: 

\begin{enumerate}	
\item{} $d\rho(\mathfrak{t})$ consists of skew-symmetric maps w.r.t $\langle-,-\rangle$.
\item{} $d\rho(\mathfrak{p})$ consists of symmetric maps w.r.t $\langle-,-\rangle$. 

In particular the real part of $\langle-,-\rangle_{\C}$ will have property $(1)$ and $(2)$ with respect to the Cartan decomposition: $(\mathfrak{g}^{\C})_{\R}=\mathfrak{u}\oplus i\mathfrak{u}$. 

\end{enumerate}   For proofs of these facts we refer to \cite{BC} and \cite{RS}.

\

By \textsl{Borel} and \textsl{Chandra} (see \cite{BC}) and also \cite{RS} we have the following theorem relating the closure of the complex orbit to that of the real orbit for real vectors $v\in V$:

\begin{thm}[BC] \label{BC} The following statements hold:
\begin{enumerate}
\item{} Given a real vector $v\in V$ then the real orbit $Gv$ is closed in $V$ w.r.t the classical topology on $V$ if and only if $G^{\mathbb{C}}v$ is closed in $V^{\mathbb{C}}$.
\item{} Given a real vector $v\in V$ then $G^{\mathbb{C}}v\cap V$ is a finite disjoint union of real $G$-orbits.
\end{enumerate}
 \end{thm}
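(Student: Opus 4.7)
The plan is to exploit the minimal vector / Kempf--Ness technology adapted to the real setting. Define the norm-square function $F_v:G\to\mathbb{R}$ by $F_v(g)=\|\rho(g)v\|^2$ and say $v$ is $G$-\emph{minimal} if $F_v$ attains its minimum at the identity; define $G^{\mathbb{C}}$-minimal analogously using $\|-\|_{\mathbb{C}}^2$. The backbone of the argument is the Kempf--Ness type statement: a complex orbit $G^{\mathbb{C}}v$ is closed in $V^{\mathbb{C}}$ iff it contains a $G^{\mathbb{C}}$-minimal vector, and (its real analogue due to Richardson--Slodowy) a real orbit $Gv$ is closed in $V$ iff it contains a $G$-minimal vector; moreover the minimal vectors in a closed orbit form a single $U$-orbit (resp.\ $K$-orbit).

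The key link between the two regimes is the lemma: for a \emph{real} vector $v\in V$, $v$ is $G$-minimal if and only if it is $G^{\mathbb{C}}$-minimal. Here I would differentiate $F_v$ at the identity: by property (1) the derivative automatically vanishes on $\mathfrak{t}$, so $G$-minimality of $v$ reduces to $\langle d\rho(X)v,v\rangle=0$ for all $X\in\mathfrak{p}$, using property (2). For the complex side one decomposes $(\mathfrak{g}^{\mathbb{C}})_{\mathbb{R}}=\mathfrak{u}\oplus i\mathfrak{u}$; the derivative vanishes on $\mathfrak{u}$ by the $U$-invariance of $\langle-,-\rangle_{\mathbb{C}}$, and the condition on $i\mathfrak{u}$ decomposes along $i\mathfrak{t}\oplus i\mathfrak{p}$. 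A short calculation (using that $v$ is real and that the real part of $\langle-,-\rangle_{\mathbb{C}}$ restricts to $\langle-,-\rangle$) shows the $i\mathfrak{t}$-piece vanishes automatically while the $i\mathfrak{p}$-piece reproduces exactly the $G$-minimality condition. This identifies the two notions of minimality on real vectors.

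With this lemma, part (1) is immediate. If $Gv$ is closed pick a $G$-minimal $w\in Gv$; by the lemma $w$ is $G^{\mathbb{C}}$-minimal, hence $G^{\mathbb{C}}w=G^{\mathbb{C}}v$ is closed. Conversely if $G^{\mathbb{C}}v$ is closed it contains some $G^{\mathbb{C}}$-minimal $w$; translating by an element of $U$ (whose action preserves minimality and maps the unique $U$-orbit of complex minimal vectors into itself) one can arrange that $w\in V$, and then the lemma forces $w$ to be $G$-minimal, making $Gw=Gv$ closed.

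For part (2), I would argue as follows. The complex orbit $G^{\mathbb{C}}v$ is a (quasi-affine) complex algebraic variety defined over $\mathbb{R}$, so its set of real points $G^{\mathbb{C}}v\cap V$ is a real algebraic set; by Whitney's theorem it has only finitely many connected components. To package these components into $G$-orbits, observe that the map $G^{\mathbb{C}}/G^{\mathbb{C}}_v\to G^{\mathbb{C}}v$ realises the set of $G$-orbits in $G^{\mathbb{C}}v\cap V$ as the kernel of the natural map in Galois cohomology $H^{1}(\mathrm{Gal}(\mathbb{C}/\mathbb{R}),G^{\mathbb{C}}_v)\to H^{1}(\mathrm{Gal}(\mathbb{C}/\mathbb{R}),G^{\mathbb{C}})$, and by the Borel--Serre finiteness theorem this $H^1$ is finite for the algebraic stabiliser $G^{\mathbb{C}}_v$. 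Disjointness of the orbits is automatic.

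The main obstacle will be the minimality lemma: checking carefully that the two gradient conditions match on real vectors requires keeping track of all of the pieces $\mathfrak{t},\mathfrak{p},i\mathfrak{t},i\mathfrak{p}$ and verifying the compatibility of $\langle-,-\rangle$ with $\langle-,-\rangle_{\mathbb{C}}$. The finiteness in (2) is not algebraically difficult once one invokes Borel--Serre, but justifying the Galois-cohomology parametrisation in the generality of a real form of a semisimple complex group (with finitely many components) is the other delicate point.
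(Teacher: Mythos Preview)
The paper does not supply a proof of this theorem: it is quoted as a known result with attribution to Borel--Harish-Chandra and Richardson--Slodowy. So there is no in-paper argument to compare against, and your sketch must be assessed on its own.

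Your minimality lemma, the forward implication in (1), and the Galois-cohomology/Borel--Serre argument for (2) are all on firm ground and are indeed the standard route. The backward implication in (1), however, has two genuine gaps.

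First, the assertion that ``translating by an element of $U$ one can arrange $w\in V$'' is not justified. You know that the $G^{\mathbb{C}}$-minimal vectors in the closed orbit form a single $U$-orbit $Uw_{0}$, and that the conjugation $\sigma_{1}$ of $V^{\mathbb{C}}$ (a norm-preserving involution fixing $v$) maps this $U$-orbit to itself. But an involution on a compact homogeneous space need not have fixed points, so $Uw_{0}\cap V\neq\emptyset$ does not follow for free. Second, even granting a real $G^{\mathbb{C}}$-minimal vector $w\in G^{\mathbb{C}}v\cap V$, your conclusion ``$Gw=Gv$'' is unwarranted: part (2) itself says $G^{\mathbb{C}}v\cap V$ can split into several distinct $G$-orbits, and nothing in your construction pins $w$ to the orbit $Gv$ rather than a conjugate one. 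You would only conclude that \emph{some} real orbit in $G^{\mathbb{C}}v\cap V$ is closed, not that $Gv$ is.

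Both gaps are closed by the same device, which is the actual engine in the Richardson--Slodowy argument: work with the convex function $f\colon i\mathfrak{u}\to\mathbb{R}$, $f(Y)=\|e^{Y}v\|^{2}_{\mathbb{C}}$. For real $v$ one has $f(\sigma(Y))=f(Y)$, where $\sigma$ is the conjugation of $\mathfrak{g}^{\mathbb{C}}$ fixing $\mathfrak{g}$; its fixed set on $i\mathfrak{u}=\mathfrak{p}\oplus i\mathfrak{t}$ is precisely $\mathfrak{p}$. If $G^{\mathbb{C}}v$ is closed then $f$ attains its minimum on $i\mathfrak{u}$, and convexity together with $\sigma$-invariance forces the minimum to be attained also at the midpoint $\tfrac{1}{2}(Y_{0}+\sigma(Y_{0}))\in\mathfrak{p}$. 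Then $e^{Y_{1}}v$ with $Y_{1}\in\mathfrak{p}\subset\mathfrak{g}$ lies in $Gv$, is real, and is $G^{\mathbb{C}}$-minimal, hence $G$-minimal by your lemma; so $Gv$ is closed. This locates the minimal vector simultaneously in $V$ and in $Gv$, repairing both defects at once.
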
 

Two real orbits: $Gv_1$ and $Gv_2$ in the disjoint union $G^{\mathbb{C}}v\cap V$ are often said to be \textsl{conjugate}. Set $||-||^2:=\langle-,-\rangle$, for the norm on $V$.

\begin{defn} A minimal vector $v\in V$ is one which satisfies $$\Big{(}\forall g\in G\Big{)}\Big{(}||g\cdot v||\geq ||v||\Big{)}.$$ \end{defn}

The set of minimal vectors will be denoted by $\mathcal{M}(G,V)\subset V$. Of course one observes the special case where $G$ is a compact group then all vectors are minimal, i.e $\mathcal{M}(G,V)=V$. As an example consider $G:=SL(2,\R)$, and $G^{\C}:=SL(2,\C)$ with the representations being the adjoint actions. Take Cartan involution: $\mathfrak{sl}(2,\R)\rightarrow \mathfrak{sl}(2,\R)$, given by $x\mapsto -x^t$. Then it is not difficult to show that if $x\in \mathfrak{so}(2)$ we have $$G^{\C}x\cap \mathfrak{sl}(2,\R)=Gx\cup G(-x),$$ and if $x\in \mathfrak{p}$, then $G^{\C}x\cap\mathfrak{sl}(2,\R)=Gx$. This classifies what happens in all cases where the complex orbit: $G^{\mathbb{C}}x$, is closed and $x\in \mathfrak{sl}(2,\R)$ (by the theorem below).

\

Now we also have the following theorem by \textsl{Richardson} and \textsl{Slodowy} in \cite{RS}, which relates the closure of a real orbit to the existence of a minimal vector:

\begin{thm}[RS] \label{RS} The following statements hold:
\begin{enumerate}
\item{} A real orbit $Gv$ is closed if and only if $Gv\cap \mathcal{M}(G,V)\neq\emptyset$.
\item{} If $v$ is a minimal vector then $Gv\cap \mathcal{M}(G,V)=Kv$.
\item{} If $Gv$ is not closed then there exist $p\in \mathfrak{p}$ and an $\alpha\in \overline{Gv}$ such that $e^{tp}\cdot v\rightarrow \alpha\in V$ exist as $t\rightarrow -\infty$, and $G\alpha$ is closed. Moreover $G\alpha$  is the unique closed orbit in the closure $\overline{Gv}\subset V$.
\item{} A vector $v\in V$ is minimal if and only if $\Big{(} \forall x\in \mathfrak{p}\Big{)}\Big{(}\langle x\cdot v,v  \rangle=0\Big{)}$, where $x\cdot v$ is the action $d\rho(x)(v)$.
\end{enumerate}
  \end{thm}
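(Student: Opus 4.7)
The plan is to introduce the function $\phi_v : G \to \mathbb{R}$, $\phi_v(g) := \|g\cdot v\|^2$, and to exploit its $K$-invariance (from property~(1) of $\langle -,-\rangle$, which makes $\rho(K)$ act by orthogonal transformations) together with the Cartan decomposition $G = Ke^{\mathfrak{p}}$ to reduce everything to the study of $\psi_v : \mathfrak{p} \to \mathbb{R}$, $\psi_v(p) := \|e^p\cdot v\|^2$. The central calculation, using symmetry of $d\rho(p)$ from property~(2), is that for fixed $p \in \mathfrak{p}$ the function $f(t) := \|e^{tp}\cdot v\|^2$ has derivatives $f'(t) = 2\langle p\cdot e^{tp}v,\, e^{tp}v\rangle$ and $f''(t) = 4\|p\cdot e^{tp}v\|^2 \geq 0$, so $f$ is convex and strictly so unless $p\cdot v = 0$.

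From this, part~4 is immediate: $v$ is minimal iff $0$ minimises $f$ for every $p$, iff by convexity $f'(0) = 2\langle p\cdot v, v\rangle = 0$ for all $p \in \mathfrak{p}$. For part~2, assuming $v$ is minimal, the set of minima of $\psi_v$ is the convex cone $\{p\in\mathfrak{p} : p\cdot v = 0\}$, corresponding to the $e^{\mathfrak{p}}$-component of the stabiliser $G_v$; combining with the Cartan form $g = ke^p$ yields $Gv \cap \mathcal{M}(G,V) = Kv$. Part~1 splits in two directions: if $v$ is minimal, strict convexity of $\psi_v$ transverse to the stabiliser direction makes $\psi_v$ coercive modulo $G_v$, so any convergent sequence $g_n\cdot v$ has bounded Cartan coordinates and the limit lies in $Gv$, proving closedness; conversely, if $Gv$ is closed, then $\phi_v$ is continuous and proper along $\mathfrak{p}$-rays on the symmetric space $G/K$, hence attains its infimum at a minimal vector.

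The main obstacle is part~3. Given a non-closed orbit $Gv$, we must produce $p \in \mathfrak{p}$ and $\alpha = \lim_{t\to -\infty}e^{tp}\cdot v$ in $\overline{Gv}\setminus Gv$ such that $G\alpha$ is closed. The plan is to apply the real Hilbert--Mumford criterion: invoke Theorem~\ref{BC} to pass to the complex orbit $G^{\mathbb{C}}\cdot v \subset V^{\mathbb{C}}$, where the complex Kempf--Ness theorem (the holomorphic analog of parts~1 and~2 for the compatible pair $(G^{\mathbb{C}},U)$, proved by the same convexity argument on the Hermitian symmetric space $G^{\mathbb{C}}/U$) supplies a one-parameter subgroup $\lambda : \mathbb{C}^* \to G^{\mathbb{C}}$ whose limit $\lim_{t\to 0}\lambda(t)\cdot v$ lies in the unique closed $G^{\mathbb{C}}$-orbit inside $\overline{G^{\mathbb{C}}\cdot v}$. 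The delicate step, which I expect to be the principal difficulty, is the descent to the real form: using compatibility of the Cartan involution $\Theta$ with the conjugation on $V^{\mathbb{C}}$ induced by the real form $V$, one must conjugate $\lambda$ (within the $G^{\mathbb{C}}$-orbit of $v$) so that $\lambda(e^t) = e^{tp}$ for some $p \in \mathfrak{p}$ and the limit $\alpha$ lies in $V$. Applying Theorem~\ref{BC}(1) to $\alpha$ then gives closedness of $G\alpha$, and uniqueness of the closed orbit in $\overline{Gv}$ follows from uniqueness in the complex picture together with the finiteness statement Theorem~\ref{BC}(2).
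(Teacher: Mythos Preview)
The paper does not prove this theorem: it is stated with attribution to Richardson and Slodowy and cited from \cite{RS} (see also the remark immediately following the statement, which discusses how the result transfers from the algebraic setting to semi-simple Lie groups and points to \cite{EJ} and \cite{CR}). So there is no in-paper proof to compare your proposal against.

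That said, your sketch for parts 1, 2, and 4 is the standard convexity argument and is essentially what Richardson--Slodowy do. For part 3 your outline is in the right spirit, but the final uniqueness step has a gap. You argue that uniqueness of the closed real orbit in $\overline{Gv}$ follows from uniqueness of the closed complex orbit together with Theorem~\ref{BC}(2). But Theorem~\ref{BC}(2) only tells you that the closed complex orbit meets $V$ in \emph{finitely many} real $G$-orbits; it does not tell you that only one of those real orbits lies in $\overline{Gv}$. To close this, one typically argues directly in the real picture: any closed orbit in $\overline{Gv}$ contains a minimal vector realising $\inf_{w\in\overline{Gv}}\|w\|$, and then one must show that two such minima lie in the same $K$-orbit (hence the same $G$-orbit). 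This last step is not a formal consequence of what you have written; in \cite{RS} it is handled by a separate convexity/gradient-flow argument on the symmetric space $G/K$, not by descent from the complex uniqueness statement.
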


Parts (1), (2) and (4) of the theorem is known as the \emph{Kempf-Ness Theorem}, for which it was first proved for linearly complex algebraic groups.

\begin{rem} The above two theorems are proved in an algebraic geometric setting, for which the group is a real linearly reductive group. To see that one can apply these results to semi-simple Lie groups (not necessarily algebraic), we refer also to a \emph{remark} in \cite{EJ}. We recall the fact that any complex semi-simple Lie group is algebraic, and also any holomorphic representation of $G^{\C}$ is also rational. Theorem \ref{RS} in fact also hold for general reductive Lie groups, see \cite{CR}.   \end{rem}

\subsection{Compatible triples and intersection of real orbits}\label{triple}

Our aim in this section is to explore the connection between compatible triples of semi-simple Lie algebras, and the intersection of real orbits. We prove a theorem, stating that if one of the real forms,  $\tilde{G}$ say, is compact, and compatible with another real form $G$ of $G^{\C}$, then two real orbits can only belong to the same complex orbit if they intersect.

We continue with the notation of the previous subsection.

\

So we now apply the well-known theorems of the previous subsection to the situation of compatible triples, i.e suppose we have another real form of $G^{\mathbb{C}}$, say $\tilde{G}$, with Lie algebra $\tilde{\mathfrak{g}}$. We shall assume that the triple $\Big{(}\mathfrak{g},\tilde{\mathfrak{g}},\mathfrak{u}\Big{)}$ is a compatible triple of real forms. So we can choose a Cartan involution of $\tilde{\mathfrak{g}}$ say $\tilde{\theta}$, which commutes with $\theta$ (the Cartan involution of $\mathfrak{g}$), i.e $[\theta,\tilde{\theta}]=0$. Denote $\tilde{G}=\tilde{K}e^{\tilde{\mathfrak{p}}}$ for the Cartan decomposition of $\tilde{G}$, and similarly we denote the local Cartan decomposition $\tilde{\mathfrak{g}}=\tilde{\mathfrak{t}}\oplus \tilde{\mathfrak{p}}$. So we have $K,\tilde{K}\subset U$. In fact by Proposition \ref{i} we can assume that $K\cap \tilde{K}\neq 1$ have a non-trivial intersection.

Consider $\tilde{G}\xrightarrow{\tilde{\rho}} GL(\tilde{V})$ to be a representation of $\tilde{G}$ also with the same complexification $\rho^{\mathbb{C}}$ as $\rho$, with $\tilde{V}\subset V^{\mathbb{C}}$ another real form. We can put similarly a $U$-invariant Hermitian form on $V^{\C}$ (possibly different from the one compatible with $V$), which is compatible with $\tilde{V}$. So we have another commutative diagram, like the one in the previous subsection for $G$:

\beq
\begin{CD}
G^{\mathbb{C}} @>\rho^{\mathbb{C}}>>GL(V^{\mathbb{C}}) \\
@A{i}AA @A{i}AA \\
\tilde{G} @>\tilde{\rho}>> GL(\tilde{V})
\end{CD}
\eeq

A good example to have in mind for such a situation is the compatible triple $\Big{(}\mathfrak{o}(p,q),\mathfrak{o}(\tilde{p},\tilde{q}), \mathfrak{o}(n)\Big{)}$ described in section \ref{compatible}. The action can be the adjoint action $\rho^{\mathbb{C}}:=Ad$ of the complex orthogonal group $O(n,\C)$, with real forms $O(p,q)$ and $O(\tilde{p},\tilde{q})$ with their adjoint actions: $\rho:=Ad$ and $\tilde{\rho}:=Ad$. Here the inner products associated to the adjoint representations can of course be 
\[-\kappa(-,\theta(-)), \ \ -\kappa(-,\tilde{\theta}(-)),\ \ -\kappa(-,\tau(-)) ~(\text{Hermitian}),\] 
where $\tau$ is the conjugation map of $\mathfrak{o}(n)$. Moreover a minimal vector $x\in \mathfrak{o}(p,q)$ in this setting will just be a vector satisfying $[x,\theta(x)]=0$. 

\

We observe that the example of the adjoint action generalises. Indeed if $V$ and $\tilde{V}$ are also compatible real forms of $V^{\mb{C}}$, then we may assume w.l.o.g, that the minimal vectors of our actions: $\mathcal{M}(G,V)$ of $\rho$ and $\mathcal{M}(\tilde{G},\tilde{V})$ of $\tilde{\rho}$, are both contained in the minimal vectors of the complexified action: $\mathcal{M}(G^{\mb{C}},V^{\mb{C}})$ of $\rho^{\mb{C}}$. We refer to \emph{Appendix} A, for a proof of this fact.

Now we prove our main theorem under the compatibility conditions of the Lie algebras and of the vector spaces the groups act on:

\begin{thm} \label{t} Suppose we have a compatible triple of semi-simple Lie algebras: $\Big{(}\mathfrak{g},\tilde{\mathfrak{g}},\mathfrak{u}\Big{)}$. Assume also that $V$ and $\tilde{V}$ are compatible real forms of $V^{\mb{C}}$. Let $v\in V$ and $\tilde{v}\in \tilde{V}$. Then the following statements hold:
\begin{enumerate}
\item{} Suppose $\tilde{v}\in G^{\mathbb{C}}v$. Then if $v\in \mathcal{M}(G,V)$ and $\tilde{v}\in \mathcal{M}(\tilde{G},\tilde{V})$ we have $Uv=U\tilde{v}$, i.e $Kv,\tilde{K}\tilde{v}\subset Uv.$
\item{} If $\tilde{G}:=U$ is the compact real form compatible with $G$, then $$Gv, \tilde{G}\tilde{v}\subset G^{\mathbb{C}}v \Leftrightarrow Gv\cap\tilde{G}\tilde{v}\neq\emptyset.$$
\end{enumerate}
 \end{thm}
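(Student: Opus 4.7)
The plan is to reduce both parts of the theorem to the Kempf--Ness statement in Theorem~\ref{RS}(2), applied not to $G$ or $\tilde G$ but to the complexified action of $G^{\mathbb{C}}$ on $V^{\mathbb{C}}$, for which the maximal compact subgroup is $U$; this yields, for any $G^{\mathbb{C}}$-minimal vector $w$, the identity $G^{\mathbb{C}}w\cap\mathcal{M}(G^{\mathbb{C}},V^{\mathbb{C}})=Uw$. The indispensable auxiliary input, which I will invoke from Appendix~A, is the observation stated just before the theorem: under the compatibility hypotheses one can arrange the $K$-, $\tilde K$- and $U$-invariant inner products so that both inclusions $\mathcal{M}(G,V)\subset\mathcal{M}(G^{\mathbb{C}},V^{\mathbb{C}})$ and $\mathcal{M}(\tilde G,\tilde V)\subset\mathcal{M}(G^{\mathbb{C}},V^{\mathbb{C}})$ hold simultaneously. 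Once that is available, both parts fall out quickly.

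For part~(1), I would use the auxiliary input to promote $v$ and $\tilde v$ to $G^{\mathbb{C}}$-minimal vectors, and then apply Theorem~\ref{RS}(2) to the complexified action: since $\tilde v\in G^{\mathbb{C}}v$ by hypothesis and is $G^{\mathbb{C}}$-minimal, it must lie in $Uv$, which forces $Uv=U\tilde v$. The inclusions $Kv\subset Uv$ and $\tilde K\tilde v\subset U\tilde v$ then follow because the compatible triple of Lie algebras from Section~\ref{compatible} gives $\mathfrak{u}=\mathfrak{t}\oplus i\mathfrak{p}=\tilde{\mathfrak{t}}\oplus i\tilde{\mathfrak{p}}$, hence $K,\tilde K\subset U$.

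For part~(2), the direction ``$\Leftarrow$'' is immediate: any common element $w$ of $Gv$ and $\tilde G\tilde v$ has the property that both $v$ and $\tilde v$ lie in $G^{\mathbb{C}}w=G^{\mathbb{C}}v$. For ``$\Rightarrow$'' the compactness of $\tilde G=U$ is the key: every vector of $\tilde V$ is automatically minimal, so $\tilde v\in\mathcal{M}(\tilde G,\tilde V)\subset\mathcal{M}(G^{\mathbb{C}},V^{\mathbb{C}})$. Then Theorem~\ref{RS}(1) applied to $G^{\mathbb{C}}$ shows $G^{\mathbb{C}}v=G^{\mathbb{C}}\tilde v$ is closed in $V^{\mathbb{C}}$, and Theorem~\ref{BC}(1) transfers this to closure of $Gv$ in $V$. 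A further application of Theorem~\ref{RS}(1) produces a vector $v'\in Gv\cap\mathcal{M}(G,V)$, promoted by the auxiliary input to a $G^{\mathbb{C}}$-minimal vector. Applying part~(1) to $v'$ and $\tilde v$ then yields $v'\in Uv'=U\tilde v=\tilde G\tilde v$, so $v'$ sits in the desired intersection $Gv\cap\tilde G\tilde v$.

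The hard step is not the deduction above but the auxiliary observation, which is exactly why the paper defers it to Appendix~A: one must select a $U$-invariant Hermitian form on $V^{\mathbb{C}}$ whose real part restricts compatibly to prescribed $K$- and $\tilde K$-invariant inner products on $V$ and $\tilde V$, in such a way that minimality tested by Theorem~\ref{RS}(4) over $\mathfrak{p}$ or over $\tilde{\mathfrak{p}}$ respectively lifts to minimality tested over $i\mathfrak{u}$ for the complex action. Granting this compatibility, the entire proof amounts to chaining Theorems~\ref{BC} and~\ref{RS} as above.
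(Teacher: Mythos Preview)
Your proposal is correct and follows essentially the same route as the paper's proof: both parts hinge on the Appendix~A fact that, under the compatibility hypotheses, $\mathcal{M}(G,V)$ and $\mathcal{M}(\tilde G,\tilde V)$ sit inside $\mathcal{M}(G^{\mathbb C},V^{\mathbb C})$, after which Theorem~\ref{RS}(2) for the complexified action (with maximal compact $U$) gives $G^{\mathbb C}v\cap\mathcal{M}(G^{\mathbb C},V^{\mathbb C})=Uv$, and for part~(2) one first closes $G^{\mathbb C}v$ and $Gv$ via Theorems~\ref{BC} and~\ref{RS} to produce a $G$-minimal vector in $Gv$. The only cosmetic difference is that the paper argues closure of $G^{\mathbb C}v$ from compactness of $U\tilde v$ rather than from $\tilde v$ being $G^{\mathbb C}$-minimal, and concludes directly rather than by quoting part~(1); note also the paper's remark that part~(2) does not actually require the $V,\tilde V$ compatibility, since only the inclusion $\mathcal{M}(G,V)\subset\mathcal{M}(G^{\mathbb C},V^{\mathbb C})$ is used there.
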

\begin{proof} For case (1) if $v$ and $\tilde{v}$ are minimal vectors in $V$ and $\tilde{V}$ respectively then they are also minimal vectors in $V^{\mathbb{C}}$. So $v,\tilde{v}\in G^{\mathbb{C}}v\cap\mathcal{M}(G^{\mathbb{C}},V^{\mathbb{C}})=Uv$, by Theorem \ref{RS}, and (1) follows. For case (2), we note that since $\tilde{G}:=U$ is the compact real form, then $\tilde{G}\tilde{v}$ is closed, and so since $\tilde{G}\tilde{v}\subset G^{\mathbb{C}}v$ then $G^{\mathbb{C}}v$ is also closed and in particular so is the real orbit $Gv$, by Theorem \ref{BC}. Hence we can choose a minimal vector $v_1\in Gv$ which is also minimal in $G^{\mathbb{C}}v$, by Theorem \ref{RS}, i.e $v_1\in \tilde{G}\tilde{v}$, as $\tilde{G}:=U$, and so proves (2). The theorem is proved.  \end{proof}

Although the theorem guarantees intersection between orbits, there are cases where the orbits intersect in a unique vector. Indeed take $G:=SL(2,\mathbb{R})$ and $\tilde{G}:=SU(2)$, and let the action be the adjoint action. The Lie algebras $\mathfrak{sl}(2,\mathbb{R})$ and $\mathfrak{su}(2)$ are naturally compatible, w.r.t to the standard embedding into $\mathfrak{sl}(2,\mathbb{C})$. It is not difficult to show that whenever $Gx, \tilde{G}\tilde{x}$ belong to the same complex orbit: $G^{\mathbb{C}}x$, for $G^{\mathbb{C}}:=SL(2,\mathbb{C})$, then $Gx\cap \tilde{G}\tilde{x}=\{x\}$.

\begin{rem} We point out that case (2) of Theorem \ref{t}, does not require $V$ and $\tilde{V}$ to be compatible real forms of $V^{\mb{C}}$. \end{rem}

\subsection{The real Riemannian case}

Using Theorem \ref{t} we finally derive a necessary condition for the existence of a standard Wick-rotation to a real Riemannian space, following the notation in Section \ref{FB} we have:

\begin{cor} Suppose $P$ and $Q$ are two Wick-rotated spaces of $M$ by a standard Wick-rotation. Let $Q$ be a real Riemannian space. Suppose $x\in V$ and $\tilde{x}\in\tilde{V}$ are two Wick-rotatable tensors. Then the real orbits $O(p,q)x$ and $O(n)\tilde{x}$ intersect, i.e $O(p,q)x\cap O(n)\tilde{x}\neq\emptyset$. \end{cor}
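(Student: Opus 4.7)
The plan is to recognize that the corollary is essentially an immediate specialization of Theorem~\ref{t}(2) to the orthogonal group setting, once we have unpacked what ``Wick-rotatable'' means in terms of orbits. First I would fix notation consistent with Section~\ref{FB}: set $G:=O(p,q)$, $\tilde{G}:=O(\tilde{p},\tilde{q})$, $G^{\mathbb{C}}:=O(n,\mathbb{C})$, with the adjoint (or tensorial) action on a suitable representation $V^{\mathbb{C}}$ whose real forms are $V$ and $\tilde{V}$. Because $Q$ is assumed to be Riemannian, we have $(\tilde{p},\tilde{q})=(n,0)$ (up to the anti-isometry noted in the introduction), so $\tilde{G}=O(n)$ is precisely the compact real form $U$ of $G^{\mathbb{C}}$.

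Next I would verify the compatible-triple hypothesis of Theorem~\ref{t}. Since $P$ and $Q$ are standard Wick-rotated, Lemma~\ref{c} provides that $\bigl(\mathfrak{o}(p,q),\mathfrak{o}(n),\mathfrak{o}(n)\bigr)$ is a compatible triple of Lie algebras inside $\mathfrak{o}(n,\mathbb{C})$ (the third slot being the compact real form, which here coincides with $\tilde{\mathfrak{g}}$). Thus the algebraic hypotheses of Theorem~\ref{t}(2) are satisfied, and by the remark immediately after Theorem~\ref{t}, no additional compatibility of the representation spaces $V$ and $\tilde{V}$ is required.

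Then I would translate ``$x$ and $\tilde{x}$ are Wick-rotatable tensors'' into the orbit condition needed: as explained in Section~\ref{FB}, Wick-rotatability means $x$ and $\tilde{x}$ represent the same geometric tensor at the intersection point $p\in P\cap Q$, so they lie in a common complex orbit, i.e.\ $G^{\mathbb{C}}x=G^{\mathbb{C}}\tilde{x}$. In particular both $Gx$ and $\tilde{G}\tilde{x}$ are contained in $G^{\mathbb{C}}x$. With $\tilde{G}=U$, the forward implication of Theorem~\ref{t}(2) then yields
\[
O(p,q)\cdot x \,\cap\, O(n)\cdot\tilde{x} \;\neq\; \emptyset,
\]
which is the claim.

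The only point that requires any care — and the ``main obstacle'' if there is one — is the identification $\tilde{G}=U$: one must check that the compact real form appearing in the compatible triple produced by the standard Wick-rotation is really the structure group $O(n)$ of $Q$, and not merely some abstract compact real form of $O(n,\mathbb{C})$. But this is exactly the content of the Riemannian hypothesis, together with the observation after the definition of standard Wick-rotation that when one of the real slices has Euclidean signature the tangent spaces $T_pP$ and $T_pQ$ are already compatible real forms inside $(T_pP)^{\mathbb{C}}$, with $T_pQ$ itself serving as the compact slice. Once this identification is in place, the corollary is a direct application of Theorem~\ref{t}(2).
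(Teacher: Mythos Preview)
Your proposal is correct and follows essentially the same route as the paper's own proof, which simply reads: ``By Section~\ref{FB}, using Lemma~\ref{c} we can apply Theorem~\ref{t}, and so the result follows.'' You have spelled out in more detail exactly the steps the paper compresses into one sentence, including the identification $\tilde{G}=U$ and the use of the remark that part~(2) of Theorem~\ref{t} does not require compatibility of $V$ and $\tilde{V}$.
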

\begin{proof} By Section \ref{FB} , using Lemma \ref{c} we can apply Theorem \ref{t}, and so the result follows. \end{proof}

For general Wick-rotated spaces $P$ and $Q$ by a standard Wick-rotation, we note that if the complex orbit: $O(n,\C)x=O(n,\C)\tilde{x}$, is closed, then a necessary condition is that for the maximally compact subgroups: $$K:=O(p)\times O(q)\subset O(p,q), \ and \ \tilde{K}:=O(\tilde{p})\times O(\tilde{q})\subset O(\tilde{p}, \tilde{q}),$$ the orbits $K\alpha$ and $\tilde{K}\tilde{\alpha}$ (of minimal vectors) must both be embedded into the same compact orbit: $O(n)\alpha$, i.e we have the following diagram of embeddings: 

\[
\begin{CD}
O(n,{\C})\cdot x@=	O(n,{\C})\cdot\tilde{x} \\
@AAA @AAA \\
O(n)\cdot \alpha@=	O(n)\cdot\tilde{\alpha} \\
@AAA @AAA \\
O(p)\times O(q)\cdot \alpha  @.  O(\tilde{p})\times O(\tilde{q})\cdot\tilde{\alpha}
\end{CD}
\] 

This is all by Theorem \ref{t}, noting that the tensor products: $V$ and $\tilde{V}$, for which the groups act on, are compatible real forms of $V^{\mb{C}}$ under a standard Wick-rotation. In order to generalise the previous Corollary, we need to know more about how these $K$-orbits are embedded, and in the case of same signatures one needs to know how many real $K$-orbits there are in the compact orbit: $O(n)\alpha$, i.e when is $l=1$, in the intersection: $$O(n)\alpha\cap V=\cup^l_j Kx_j?$$

There are examples where $l=1$ and $l\neq 1$ for different $O(p,q)$, such examples can be found within the adjoint action, as we will see in the next section.

\subsection{The adjoint action of the Lorentz groups $O(n-1,1)$}

As an example we consider the adjoint action of the Lorentz group $O(n-1,1)\subset O(n,\C)$. We prove that whenever the complex orbit $O(n,\C)\pmb{x}$ is closed for a real vector $\pmb{x}\in \mathfrak{o}(n-1,1)$, then there is a unique real closed orbit in the complex orbit, i.e $O(n,\C)\pmb{x}\cap \mathfrak{o}(n-1,1)=O(n-1,1)\pmb{x}$. We demonstrate that this is the only pseudo-orthogonal group $O(p,q)$ with this property under the adjoint action.
\

Let $\mathfrak{o}(n-1,1)\xrightarrow{\theta}\mathfrak{o}(n-1,1)$, be the Cartan involution given by: $$\pmb{x}\mapsto Ad(I_{n-1,1})(\pmb{x})=\bar{\pmb{x}}.$$ We use the standard norm on $\mathfrak{o}(n-1,1)$, given by $||\pmb{x}||^2:=\lambda\kappa_{\theta}(\pmb{x},\pmb{x})$, where $\lambda>0$, is chosen such that $||\pmb{x}||^2=tr(\pmb{x}^2).$ Observe that here the global Cartan decomposition of $O(n-1,1)$ is given by: $O(n-1,1)=Ke^{\mathfrak{p}}$, where $K=O(n-1)\times O(1)$, with local Cartan decomposition: $\mathfrak{o}(n-1,1)=\mathfrak{t}\oplus \mathfrak{p}$, with $\mathfrak{t}$ consisting of matrices in block form $\mathfrak{t}\cong \mathfrak{so}(n-1)\oplus \mathfrak{so}(1)$, and $\mathfrak{p}$, consists of matrices of the form: $$\begin{pmatrix} 0_{(n-1)\times (n-1)} & iA_{(n-1)\times 1}  \\ -iA^t_{(n-1)\times 1} & O_{1\times 1} \\ \end{pmatrix}, \ \ A\in \mathbb{R}^{n-1}.$$
\

We will borrow the following two standard results from spectral theory:

\begin{lem}[\emph{Spectral theorem for skew-symmetric matrices}] \label{spectral} If $\pmb{x}\in\mathfrak{so}(n)$ then there exist an orthogonal matrix $g\in O(n)$, such that: $$g\pmb{x}g^{-1}=\begin{bmatrix} \mathfrak{so}(2) &  \\  & \mathfrak{so}(2) \\ & & \mathfrak{so}(2) \\ & & & \ddots \\ & & & & \mathfrak{so}(2) \\ & & & & & 0 \\ & & & & & & 0 \\ & & & & & & & \ddots \\ & & & & & & & & 0 \end{bmatrix},$$ where $\mathfrak{so}(2)$ is the $2\times 2$ matrix of the form $\begin{pmatrix} 0 & x \\ -x & 0 \\ \end{pmatrix}$.   \end{lem}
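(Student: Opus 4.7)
The plan is to prove this by induction on $n$, extracting one $2\times 2$ skew block at a time by finding a real two-dimensional $\pmb{x}$-invariant subspace on which $\pmb{x}$ restricts to the desired canonical form, then passing to the orthogonal complement.

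First I would reduce the problem to finding a real invariant subspace of the right form. View $\pmb{x}$ as a real operator on $\mathbb{R}^n$ with the standard inner product. Since $\pmb{x}^t=-\pmb{x}$, viewed as a complex operator on $\mathbb{C}^n$ with the standard Hermitian inner product, $\pmb{x}$ is skew-Hermitian. Thus $\pmb{x}$ is normal, diagonalisable, and all its eigenvalues are purely imaginary. If $\pmb{x}=0$ the statement is trivial (pure zero block); otherwise pick a nonzero eigenvalue $i\lambda$ with $\lambda>0$ and a complex eigenvector $z=u+iv$ with $u,v\in\mathbb{R}^n$. Writing out $\pmb{x}z=i\lambda z$ and separating real and imaginary parts gives $\pmb{x}u=-\lambda v$, $\pmb{x}v=\lambda u$, so $W:=\mathrm{span}_{\mathbb{R}}\{u,v\}$ is a real two-dimensional $\pmb{x}$-invariant subspace.

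Next I would show that $u$ and $v$ can be chosen orthogonal with equal norm, so that after normalisation $\pmb{x}|_W$ has the matrix $\begin{pmatrix}0 & \lambda\\-\lambda & 0\end{pmatrix}$ in an orthonormal basis. Since $\bar z=u-iv$ is an eigenvector with eigenvalue $-i\lambda\neq i\lambda$, orthogonality of eigenspaces of a normal operator yields
\[
0=\langle z,\bar z\rangle=(u+iv)\cdot\overline{(u-iv)}=(u+iv)\cdot(u+iv)=\|u\|^2-\|v\|^2+2i\,(u\cdot v),
\]
so $\|u\|=\|v\|$ and $u\cdot v=0$. Normalising $u,v$ to unit length gives the desired orthonormal frame on $W$.

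Now I would use skew-symmetry to pass to the orthogonal complement. For any $w\in W^{\perp}$ and any $w_1\in W$, $\langle \pmb{x}w,w_1\rangle=-\langle w,\pmb{x}w_1\rangle=0$ since $\pmb{x}w_1\in W$, so $\pmb{x}(W^{\perp})\subset W^{\perp}$. The restriction $\pmb{x}|_{W^{\perp}}$ is again skew-symmetric with respect to the induced inner product on $W^{\perp}\cong\mathbb{R}^{n-2}$, so by induction there is an orthonormal basis of $W^{\perp}$ bringing $\pmb{x}|_{W^{\perp}}$ to the block form in the statement. Concatenating the orthonormal basis of $W$ with that of $W^{\perp}$ gives an orthonormal basis of $\mathbb{R}^n$; the matrix $g\in O(n)$ whose rows are these basis vectors (equivalently, $g^{-1}$ whose columns are these vectors) conjugates $\pmb{x}$ into the claimed normal form, with the leftover zero eigenspace producing the trailing $0$'s at the bottom.

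The only real content is the eigenvalue computation showing $u\perp v$ and $\|u\|=\|v\|$; the rest is induction plus the standard fact that for a skew-symmetric operator the orthogonal complement of an invariant subspace is invariant. So the main obstacle, if there is one, is bookkeeping when the same nonzero eigenvalue $i\lambda$ has multiplicity greater than one, but the induction on dimension bypasses this entirely: we simply peel off one eigenpair at a time and never need to handle the higher-multiplicity case directly.
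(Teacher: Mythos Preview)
Your proof is correct and is the standard inductive argument for the real spectral theorem for skew-symmetric matrices. The paper does not actually prove this lemma: it is introduced with the sentence ``We will borrow the following two standard results from spectral theory'' and then merely stated, so there is no proof in the paper to compare against---your argument is precisely the kind of elementary derivation the authors are implicitly citing.
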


\begin{cor}\label{linear} Let $\pmb{x}_1,\pmb{x}_2\in \mathfrak{so}(n)$ be skew-symmetric matrices, having identical characteristic polynomials. Then there exist $g\in O(n)$ such that $g\textbf{x}_1g^{-1}=\pmb{x}_2$.   \end{cor}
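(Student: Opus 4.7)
The plan is to reduce each matrix to its canonical block-diagonal form via Lemma \ref{spectral} and then compare.

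First I would apply Lemma \ref{spectral} to both $\pmb{x}_1$ and $\pmb{x}_2$, obtaining orthogonal matrices $g_1, g_2 \in O(n)$ such that $g_i \pmb{x}_i g_i^{-1}$ is block-diagonal with $2\times 2$ skew blocks of the form $\begin{pmatrix} 0 & a_{i,j} \\ -a_{i,j} & 0 \end{pmatrix}$ followed by zero entries on the diagonal. Computing the characteristic polynomial of such a block-diagonal matrix gives
\[
\chi(t) = t^{m_i} \prod_j \bigl(t^2 + a_{i,j}^2\bigr),
\]
where $m_i$ is the number of zero blocks. Since $\chi_{\pmb{x}_1} = \chi_{\pmb{x}_2}$ by hypothesis, we conclude $m_1 = m_2$ and the multisets $\{a_{1,j}^2\}$ and $\{a_{2,j}^2\}$ coincide.

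Next I would bring the two canonical forms into literal equality. The multiset coincidence allows us to permute the $2\times 2$ blocks of $g_1 \pmb{x}_1 g_1^{-1}$ so that its $j$-th block has parameter $a_j$ with $a_j^2 = a_{2,j}^2$; this permutation is realised by conjugation by a suitable permutation matrix $P \in O(n)$ (permutations of paired coordinates together with a permutation of the zero entries). What remains is the sign ambiguity $a_j = \pm a_{2,j}$ on each block. For any block where the signs disagree, conjugation by the orthogonal involution $\mathrm{diag}(1,-1)$ on that $2\times 2$ subspace flips the sign of the parameter, and extending by the identity on the remaining coordinates yields an element $D \in O(n)$ that converts $a_j$ to $a_{2,j}$ block by block.

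Composing these maneuvers gives $g := g_2^{-1} D P g_1 \in O(n)$ with $g \pmb{x}_1 g^{-1} = \pmb{x}_2$. There is essentially no obstacle here beyond bookkeeping; the only subtlety is the sign ambiguity in the canonical form from Lemma \ref{spectral}, which is handled by the diagonal $\pm 1$ conjugations above (these lie in $O(n)$ but generally not in $SO(n)$, which is why the statement uses $O(n)$ rather than $SO(n)$).
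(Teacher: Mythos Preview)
Your proof is correct. The paper itself does not supply a proof of this corollary, simply quoting it (together with Lemma~\ref{spectral}) as a standard result borrowed from spectral theory; your argument is precisely the natural deduction from Lemma~\ref{spectral} that the paper leaves implicit.
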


By applying these results, we can prove the following lemma:

\begin{lem} For any vector $\pmb{x}\in \mathfrak{t}\cup \mathfrak{p}$ we have $$O(n,\mathbb{C})\pmb{x}\cap\mathfrak{o}(n-1,1)=O(n-1,1)\pmb{x}.$$  \end{lem}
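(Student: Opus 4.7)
The inclusion $O(n-1,1)\pmb{x} \subseteq O(n,\C)\pmb{x} \cap \mathfrak{o}(n-1,1)$ is immediate; the plan is to establish the reverse inclusion by exploiting the fact that $\pmb{x}$ is a minimal vector both in $\mathfrak{o}(n-1,1)$ and in $\mathfrak{o}(n,\C)$, and then combining Theorems~\ref{BC} and~\ref{RS} with the explicit block form of $\mathfrak{t}$ and $\mathfrak{p}$. First I would observe that if $\pmb{x}\in \mathfrak{t}$ then $\theta(\pmb{x})=\pmb{x}$, while if $\pmb{x}\in \mathfrak{p}$ then $\theta(\pmb{x})=-\pmb{x}$; in either case $[\pmb{x},\theta(\pmb{x})]=0$, so Theorem~\ref{RS}(4) makes $\pmb{x}$ a minimal vector in $\mathfrak{o}(n-1,1)$. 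The Cartan involution of the real Lie group $O(n,\C)$ with compact form $O(n)$ also restricts to complex conjugation on $\mathfrak{o}(n,\C)$, so the same computation (an instance of the Appendix~A compatibility, applied to the compatible triple $(\mathfrak{o}(n-1,1),\mathfrak{o}(n),\mathfrak{o}(n,\C))$) shows $\pmb{x}$ is also minimal in $\mathfrak{o}(n,\C)$. Hence the orbits $O(n-1,1)\pmb{x}$ and $O(n,\C)\pmb{x}$ are both closed by Theorem~\ref{BC}.

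Next I would pick an arbitrary $\pmb{y}\in O(n,\C)\pmb{x}\cap \mathfrak{o}(n-1,1)$ and reduce to the minimal-vector case. Since $O(n,\C)\pmb{x}$ is closed, Theorem~\ref{BC} gives that $O(n-1,1)\pmb{y}$ is also closed, so by Theorem~\ref{RS}(1) it contains a minimal vector $\pmb{y}_0$. The minimality criterion in $\mathfrak{o}(n-1,1)$ and in $\mathfrak{o}(n,\C)$ are both given by the equation $[\pmb{y}_0,\bar{\pmb{y}}_0]=0$, so $\pmb{y}_0$ is also minimal in $\mathfrak{o}(n,\C)$, and Theorem~\ref{RS}(2) applied to the adjoint action of $O(n,\C)$ yields $\pmb{y}_0 \in O(n,\C)\pmb{x}\cap \mathcal{M}(O(n,\C),\mathfrak{o}(n,\C)) = O(n)\pmb{x}$. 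Thus there exists $g\in O(n)$ with $\pmb{y}_0 = g\pmb{x}g^{-1}$, and it remains only to upgrade this to a conjugacy by an element of the maximally compact subgroup $K = O(n-1)\times O(1) \subset O(n-1,1)$.

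To finish I would split into two cases. If $\pmb{x}\in \mathfrak{t}$, then $\pmb{y}_0$ is real skew-symmetric (being $O(n)$-conjugate to such) and also lies in $\mathfrak{o}(n-1,1)$, so $\pmb{y}_0 \in \mathfrak{o}(n)\cap \mathfrak{o}(n-1,1) = \mathfrak{t}$; hence $\pmb{x}$ and $\pmb{y}_0$ both sit in $\mathfrak{so}(n-1)\oplus \{0\}$ with the same characteristic polynomial, and Corollary~\ref{linear} supplies an $O(n-1)$-conjugation between them. If $\pmb{x}\in \mathfrak{p}$, write $\pmb{x}=i\tilde{\pmb{x}}$ with $\tilde{\pmb{x}}\in \mathfrak{so}(n)$ of off-diagonal block form determined by some $B\in \R^{n-1}$, so that $\pmb{y}_0 = ig\tilde{\pmb{x}}g^{-1}$ is purely imaginary; comparing with the explicit shape of $\mathfrak{o}(n-1,1)$ forces the upper-left $(n-1)\times(n-1)$ block of $g\tilde{\pmb{x}}g^{-1}$ to vanish, so this element again has off-diagonal block form determined by some $B'\in \R^{n-1}$. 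Matching characteristic polynomials $\lambda^{n-2}(\lambda^2+|B|^2)=\lambda^{n-2}(\lambda^2+|B'|^2)$ gives $|B|=|B'|$, so $B'=hB$ for some $h\in O(n-1)$, and then $\mathrm{diag}(h,1)\in K$ conjugates $\pmb{x}$ to $\pmb{y}_0$. The main obstacle is this second case: one must first verify that the globally minimal vector $\pmb{y}_0$ is constrained back into $\mathfrak{p}$ before the $O(n-1)$-matching of the column vectors $B$ and $B'$ can be carried out.
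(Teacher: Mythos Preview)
Your argument is correct and, in one respect, more complete than the paper's own. Both proofs finish the same way: for $\pmb{x}\in\mathfrak{t}$ one strips off the trivial last row/column and invokes Corollary~\ref{linear} on $\mathfrak{so}(n-1)$, while for $\pmb{x}\in\mathfrak{p}$ one matches norms and uses the transitivity of $O(n-1)$ (equivalently, of $Ad(K_0)|_{\mathfrak p}$ in the paper's phrasing) on the sphere $S^{n-2}\subset\mathfrak{p}\cong\R^{n-1}$. The paper, however, simply \emph{assumes} that the second vector $\tilde{\pmb{x}}$ already lies in the same Cartan piece ($\mathfrak{t}$ or $\mathfrak{p}$) as $\pmb{x}$; it never justifies why an arbitrary element of $O(n,\C)\pmb{x}\cap\mathfrak{o}(n-1,1)$ can be replaced by such a vector. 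Your use of Theorems~\ref{BC} and~\ref{RS} to pass to a minimal $\pmb{y}_0$, followed by the observation that $\pmb{y}_0\in O(n)\pmb{x}$ forces it to be entrywise real (hence in $\mathfrak{t}$) or entrywise imaginary (hence in $\mathfrak{p}$), supplies exactly this missing reduction. One small slip: the triple you mention should be $(\mathfrak{o}(n-1,1),\mathfrak{o}(n),\mathfrak{o}(n))$ rather than ending in $\mathfrak{o}(n,\C)$, since the third entry of a compatible triple is the compact real form; but the point you need---that the Cartan involution is complex conjugation in both pictures, so minimality transfers---is unaffected.
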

\begin{proof} Suppose first $\pmb{x}\in \mathfrak{o}(n-1,1)$ and $\pmb{\tilde{x}}\in \mathfrak{o}(n-1,1)$ belong to the compact real part $\mathfrak{t}=\mathfrak{so}(n-1)\oplus\mathfrak{so}(1)$, and moreover lie in the same $O(n,\mathbb{C})$-orbit. Then we can remove the last row and column of the matrices $\pmb{x}$ and $\pmb{\tilde{x}}$, and they will still have the same characteristic polynomial. Call these $\pmb{y}$ and $\pmb{\tilde{y}}$, then they are in $\mathfrak{so}(n-1)$ and so must lie in the same $O(n-1)$-orbit by Corollary \ref{linear}. Now by extending a matrix in $O(n-1)$ to a matrix in $K$ (in the obvious way) then $\pmb{x}$ and $\pmb{\tilde{x}}$ must lie in the same $O(n-1,1)$-orbit.

Consider now the case where $\pmb{x},\pmb{\tilde{x}}\in \mathfrak{p}$ are contained in the non-compact part. It is easy to see by calculating the characteristic polynomials that if $\pmb{x},\pmb{\tilde{x}}$ lie in the same $O(n,\mathbb{C})$-orbit then they lie in the $(n-2)$-sphere: $$S^{n-2}:=\{\pmb{x}\in \mathfrak{p}\Big{|}||\pmb{x}||=1\},$$ where the norm $||,||$ is proportional to the Killing form: $\kappa(\cdot,\cdot)$ on $\mathfrak{o}(n-1,1)$ restricted to $\mathfrak{p}$. Now we already know that $$Ad(K_0)_{|_{\mathfrak{p}}}:=\{\mathfrak{p}\xrightarrow{Ad(k)} \mathfrak{p}|k\in K_0\},$$ where $K_0=SO(n-1)\times SO(1)\cong SO(n-1)$, is contained in the space of isometries (as a closed matrix subgroup): $$Ad(K_0)_{|_{\mathfrak{p}}}\subset Isom(S^{n-2})\cong O(n-1),$$ w.r.t the restricted norm metric. Now observe that $Ad(K_0)_{|_{\mathfrak{p}}}$ has Lie algebra $\cong \mathfrak{o}(n-1,1)$, and is connected. This follows because $\mathfrak{t}\xrightarrow{ad} \mathfrak{gl}(\mathfrak{p})$ is faithful, as $\mathfrak{o}(n-1,1)$ is simple for all $n\geq 1$, hence the kernel of the restricted adjoint action: $K_0\xrightarrow{Ad} GL(\mathfrak{p})$ is a discrete subgroup of $K_0$. So clearly: $$Ad(K_0)_{|_{\mathfrak{p}}}=Isom(S^{n-2})_0\cong SO(n-1),$$ however $Isom(S^{n-2})_0$ acts transitively on $S^{n-2}$, and so proves that $\pmb{x},\pmb{\tilde{x}}$ lie in the same $K_0$-orbit, and hence in the same $O(n-1,1)-$orbit. This proves the lemma.
 \end{proof}

Using the previous lemma we can prove our claim, that $O(n,\C)\pmb{x}$ has a unique real Lorentz orbit: $O(n-1,1)\pmb{x}$, when it is closed, and $\pmb{x}\in \mathfrak{o}(n-1,1)$.

\begin{thm} For any minimal vector $\pmb{x}\in \mathfrak{o}(n-1,1)$ we have $$O(n,\mathbb{C})\pmb{x}\cap\mathfrak{o}(n-1,1)=O(n-1,1)\pmb{x}.$$  \end{thm}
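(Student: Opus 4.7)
My plan is to establish a canonical form for minimal vectors $\pmb{x}\in \mathfrak{o}(n-1,1)$ under conjugation by the maximal compact subgroup $K=O(n-1)\times O(1)\subset O(n-1,1)$, and to show that this canonical form is completely determined by the characteristic polynomial of $\pmb{x}$ as an element of $\mathfrak{o}(n,\C)$. Since the characteristic polynomial is invariant under the larger complex group, any two minimal vectors in the same $O(n,\C)$-orbit will share the same canonical form, and hence lie in the same $O(n-1,1)$-orbit. The statement then follows: for any $\tilde{\pmb{x}}\in O(n,\C)\pmb{x}\cap\mathfrak{o}(n-1,1)$, minimality of $\pmb{x}$ makes the complex orbit closed (Theorem \ref{BC}(1)), so by Theorem \ref{BC}(2) the intersection is a finite union of closed real orbits, and Theorem \ref{RS}(1) lets us pick a minimal representative in each; the canonical form argument then identifies that representative with a point of $O(n-1,1)\pmb{x}$.

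The first step is to exploit minimality. By Theorem \ref{RS}(4), $\pmb{x}=\pmb{t}+\pmb{p}$ is minimal iff $[\pmb{t},\pmb{p}]=0$. Writing $\pmb{t}=\bigl(\begin{smallmatrix} A & 0 \\ 0 & 0\end{smallmatrix}\bigr)$ with $A\in \mathfrak{so}(n-1)$ and $\pmb{p}=\bigl(\begin{smallmatrix} 0 & ib \\ -ib^t & 0\end{smallmatrix}\bigr)$ with $b\in \R^{n-1}$, a direct bracket computation reduces this to $Ab=0$, so $b\in \ker A$. I then apply Lemma \ref{spectral} to find $g_1\in O(n-1)$ bringing $A$ into block-diagonal form with $\mathfrak{so}(2)$-blocks $\alpha_1 J,\dots,\alpha_r J$ (with $\alpha_j>0$) followed by a $k\times k$ zero block, where $k=n-1-2r$; since $b\in\ker A$, the rotated vector $g_1 b$ is supported on the last $k$ coordinates, and a further rotation $g_2\in O(k)$ (commuting with $g_1 A g_1^{-1}$) sends it to $\|b\|\,e_{2r+1}$. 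Extending by the identity in the timelike coordinate gives an element $h\in K\subset O(n-1,1)$ that conjugates $\pmb{x}$ to the canonical form
\[
\pmb{x}_{\mathrm{can}} \;=\; \bigoplus_{j=1}^r \alpha_j J \;\oplus\; 0_{k-1} \;\oplus\; \begin{pmatrix} 0 & i\|b\| \\ -i\|b\| & 0\end{pmatrix}.
\]

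A direct block-computation gives
\[
\det(\lambda I - \pmb{x}_{\mathrm{can}}) \;=\; \lambda^{k-1}\,(\lambda^2-\|b\|^2)\,\prod_{j=1}^r(\lambda^2+\alpha_j^2).
\]
Because $\pmb{p}\in\mathfrak{p}$ has rank at most two, the only non-imaginary nonzero roots of $\det(\lambda I-\pmb{x})$ are the pair $\pm\|b\|$, while the nonzero purely imaginary roots are the pairs $\pm i\alpha_j$. Hence $\|b\|^2$, the multiset $\{\alpha_j^2\}$, and $k$ are all recovered from the characteristic polynomial, so $\pmb{x}_{\mathrm{can}}$ is uniquely determined by this $O(n,\C)$-invariant, closing the argument.

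The main obstacle I anticipate is the simultaneous reduction of $\pmb{t}$ and $\pmb{p}$ to canonical form by a single element of $K$: this coupling, rather than the individual cases $A=0$ or $b=0$ already handled by the preceding Lemma, is what requires the minimality hypothesis $Ab=0$ in an essential way, since that is precisely what forces $b$ into the kernel block of $A$ after block-diagonalization and thereby decouples the normalization of $\pmb{t}$ from that of $\pmb{p}$. The rank-two constraint on $\pmb{p}$, built into the structure of $\mathfrak{p}\subset \mathfrak{o}(n-1,1)$, is the other essential ingredient—without it the characteristic polynomial would not uniquely separate the real-root contribution from the imaginary-root contribution.
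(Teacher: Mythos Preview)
Your proof is correct and follows essentially the same approach as the paper: both use minimality to obtain $Ab=0$, apply the spectral theorem (Lemma \ref{spectral}) to $A$, and exploit the characteristic polynomial to separate the real eigenvalue $\pm\|b\|$ coming from $\mathfrak{p}$ from the imaginary eigenvalues $\pm i\alpha_j$ coming from $\mathfrak{t}$. Your canonical-form packaging is somewhat cleaner than the paper's direct comparison of two minimal vectors (which case-splits on $\dim\ker A$), and you make explicit both the reduction via Theorems \ref{BC} and \ref{RS} from arbitrary elements of $O(n,\C)\pmb{x}\cap\mathfrak{o}(n-1,1)$ to minimal representatives, and the reason why the $\mathfrak{t}$- and $\mathfrak{p}$-parts are separately determined by the complex orbit---steps the paper's argument leaves implicit.
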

\begin{proof} Let $\pmb{x}$ be a minimal vector of $\mathfrak{o}(n-1,1)$, i.e we can write $$\pmb{x}:=\begin{pmatrix} A \\  & & 0 \\ \end{pmatrix}+\begin{pmatrix} 0_{(n-1)\times (n-1)} & ix  \\ -ix^t & 0 \\ \end{pmatrix},$$ where $A\in \mathfrak{so}(n-1)$ with $Ax=0$. Suppose now that there is another minimal vector: $$\tilde{\pmb{x}}:=\begin{pmatrix} \tilde{A} \\  & & 0 \\ \end{pmatrix}+\begin{pmatrix} 0_{(n-1)\times (n-1)} & i\tilde{x}  \\ -i\tilde{x}^t & 0 \\ \end{pmatrix},$$ belonging to the same complex orbit as $\pmb{x}$. Denote $\pmb{x}=t+p$ and $\tilde{\pmb{x}}=\tilde{t}+\tilde{p}$ for the components defined previously. We may assume $A,\tilde{A}, x, \tilde{x}$ are all non-zero. Let $V_0$ (respectively $\tilde{V}_0$) denote the kernel of the linear maps: $\mathbb{R}^{n-1}\rightarrow \mathbb{R}^{n-1}$, corresponding to the matrices $A$ (respectively $\tilde{A}$). Note that $x\in V_0$ and $\tilde{x}\in \tilde{V}_0$. By the previous lemma, we know that there exist $k_1,k_2\in K$, such that $k_1\cdot t=\tilde{t}$ and $k_2\cdot p=\tilde{p}$. This means that $k_1V_0=\tilde{V}_0$. Suppose $Dim(V_0)=1$. Then $k_1x=\lambda\tilde{x}$, for some $\lambda\in\mathbb{R}$. Now using the norm induced by the inner product on $\mathfrak{p}$: $\kappa_{\theta}(-,-)$, we can say that $||p||=||\tilde{p}||$, i.e $x$ and $\tilde{x}$ have the same Euclidean norm in $\mathbb{R}^{n-1}$. This means that $\lambda=1$, and hence $k_1\cdot \pmb{x}=\tilde{\pmb{x}}$.

Now for the general case, assume $Dim(V_0)>1$. We may assume $A$ has the form $A:=\begin{pmatrix} A_1 &  \\  & 0_{l} \\ \end{pmatrix}$, for some $A_1\in \mathfrak{so}(n-l)$, where $l\geq 2$, and $A_1$ has kernel of dimension 1, as an operator: $\mathbb{R}^{n-l}\rightarrow \mathbb{R}^{n-l}$. Indeed we may assume that $A_1$ has the form: $$\begin{bmatrix} \mathfrak{so}(2) &  \\  & \mathfrak{so}(2) \\ & & \mathfrak{so}(2) \\ & & & \ddots \\ & & & & \mathfrak{so}(2) \\ & & & & & 0 \end{bmatrix},$$ by Lemma \ref{spectral} and Corollary \ref{linear}. Moreover if $x=(x_1,x_2,\dots, x_{n-1})^T$, then denote $x_1:=(x_1,x_2,\dots, x_{n-l})^T\in\mathbb{R}^{n-l}$. One can see that $A_1x_1=0$. We can do this also to $\tilde{A}$, and denote similarly the $(n-l)\times (n-l)$ matrix by $\tilde{A}_1$, and the vector by $\tilde{x}_1$, as $k_1t=\tilde{t}$. There exist $k\in O(n-l)\times O(1)$ such that $k\cdot A_1=\tilde{A}_1$ by Corollary \ref{linear}, i.e we may assume $k_1=\begin{pmatrix}k &  \\  & I_l \\ \end{pmatrix}$. So applying the previous argument to this case, we have $kx_1=\tilde{x}_1$, and hence $k_1\cdot \pmb{x}=\tilde{\pmb{x}}$. This proves the theorem.
 \end{proof}

Now for general $O(p,q)$ with $p,q\neq 1$, then there is always a minimal vector $\pmb{x}\in \mathfrak{o}(p,q)$ such that the closed orbit $O(n,\C)\pmb{x}$ has at least two disjoint real orbits: $O(p,q)\pmb{x}_1$ and $O(p,q)\pmb{x}_2$. To see this, note that if $p,q> 1$ then we can choose the block matrix of the form: $$\pmb{x}:=\begin{bmatrix} \mathfrak{so}(2) & 0 \\ 0 & \mathfrak{so}(2)\\ \end{bmatrix} \in \mathfrak{so}(p)\oplus\mathfrak{so}(q),$$ where the $\mathfrak{so}(2)$-blocks have different characteristic polynomials. Consider the same matrix, but with the blocks interchanged. Call this matrix $\pmb{x}_1\in \mathfrak{so}(p)\oplus \mathfrak{so}(q)$. Then these are in the same $O(n)$-orbit by Corollary \ref{linear}, but can not be related in the same $O(p)\times O(q)$-orbit, hence $O(p,q)\pmb{x}$ and $O(p,q)\pmb{x}_1$ are two disjoint orbits in $O(n,\mathbb{C})\pmb{x}$. 

\subsection{Uniqueness of real orbits and the class of complex Lie groups}

An interesting question is: When is there a unique real orbit in the complex orbit, i.e., when does one have $G^{\C}v\cap V=Gv$, for $v\in V$? In this section we give a class of groups for which this holds. Recall that one such class is the compact groups, easily deduced from the results of \textsl{Richardson} and \textsl{Slodowy}, i.e., Theorem \ref{RS} in \cite{RS}. 

We prove that if $G$ (an arbitrary Lie group) has the structure of a complex Lie group, and $V$ has a complex structure $V\xrightarrow{J} V$, such that $G\xrightarrow{\rho} GL(V)$ is a complex representation w.r.t $J$, then we have uniqueness. We of course assume that $G\subset G^{\mathbb{C}}$ is a real form of some complex Lie group, and we have the following commutative diagram (as before):

\beq
\begin{CD} \label{arrow}
G^{\mathbb{C}} @>\rho^{\mathbb{C}}>> GL(V^{\mathbb{C}}) \\
@A{i}AA @A{i}AA \\
G @>\rho>> GL(V)
\end{CD}
\eeq

\begin{thm}\label{complex} Let $G$ be a complex Lie group, and as a real Lie group let it be a real form of some complex Lie group $G^{\C}$. Suppose $V$ has a complex structure $J$ and the representations are as in the commutative diagram (\ref{arrow}), with $\rho$ a complex representation w.r.t $J$. Then if $v\in V$, we have a unique real $G$-orbit in the complex orbit: $G^{\mathbb{C}}v$, i.e $G^{\mathbb{C}}v\cap V=Gv.$ \end{thm}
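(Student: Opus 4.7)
The plan is to exploit the fact that \emph{two} complex structures are in play: the structure $J$ coming from $G$ (and $V$) being complex, and the structure $i=1\otimes i$ coming from the complexification $\otimes_\R\C$. I would decompose both $\mathfrak{g}^\C$ and $V^\C$ with respect to $J$, identify the real form as sitting ``diagonally'' inside the complexification, and then observe that the complex orbit of a real vector becomes a kind of square whose intersection with the diagonal collapses to the real orbit.

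First, since $G$ is complex, $\mathfrak{g}$ carries a complex structure $J$ with $J[X,Y]=[JX,Y]=[X,JY]$ (by $\C$-bilinearity of the bracket). Viewing $G$ as a real Lie group and extending $J$ to $\tilde J$ on $\mathfrak{g}^\C=\mathfrak{g}_\R\otimes_\R\C$, I get a decomposition into $\pm i$ eigenspaces
\[ \mathfrak{g}^\C=\mathfrak{m}_+\oplus\mathfrak{m}_-. \]
The identities above force $\mathfrak{m}_\pm$ to be complex Lie subalgebras with $[\mathfrak{m}_+,\mathfrak{m}_-]=0$, and $\mathfrak{m}_+\cong\mathfrak{g}$, $\mathfrak{m}_-\cong\bar{\mathfrak{g}}$. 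The real form embeds diagonally: $X\mapsto(X,X)\in\mathfrak{g}\oplus\bar{\mathfrak{g}}$. Doing the same to $V$ gives $V^\C=V_+\oplus V_-\cong V\oplus\bar V$, and $V_\R\hookrightarrow V^\C$ is again diagonal, $v\mapsto(v,v)$.

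Next I would use that $\rho$ is complex, i.e.\ $d\rho$ commutes with $J$, to see that $V_\pm$ are $\rho^\C$-invariant. A short direct computation shows that $d\rho^\C(\mathfrak{m}_+)$ acts as $d\rho$ on $V_+\cong V$ and trivially on $V_-$, and symmetrically for $\mathfrak{m}_-$. Integrating, using $[\mathfrak{m}_+,\mathfrak{m}_-]=0$ and matching Lie algebras inside $GL(V^\C)$, I get $\rho^\C(G^\C_0)=\rho(G_0)\times\rho(G_0)$ acting componentwise on $V_+\oplus V_-$, while $G\subset G^\C$ acts diagonally. Combined with $G^\C=G\,G^\C_0$, the complex orbit of $v=(v,v)$ is
\[ G^\C v=\{(\rho(h_1)v,\rho(h_2)v)\,:\,h_1,h_2\in G,\ h_1G_0=h_2G_0\}. \]
A point of this orbit lies in the diagonal $V_\R$ iff $\rho(h_1)v=\rho(h_2)v$, and any such point is already in $Gv$; conversely each $w\in Gv$ appears as $(w,w)$ from the diagonal $G$-action. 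Hence $G^\C v\cap V=Gv$.

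The main obstacle is the passage from the Lie algebra splitting $\mathfrak{g}^\C=\mathfrak{m}_+\oplus\mathfrak{m}_-$ to a genuine product-type decomposition of $\rho^\C(G^\C)$ at the group level, since $G^\C$ need not be simply connected and may have several components. The commutation relation $[\mathfrak{m}_+,\mathfrak{m}_-]=0$ is the essential input: it guarantees that the integrated subgroups of $\rho^\C(G^\C_0)$ commute, so that $\rho^\C(G^\C_0)$ and the connected group $\rho(G_0)\times\rho(G_0)\subset GL(V^\C)$ share the same Lie algebra and hence coincide, avoiding any covering ambiguity. Non-identity components then cause no further difficulty because $G$ already embeds diagonally by construction, so the relation $G^\C=G\,G^\C_0$ reduces the general case to the identity component computation above.
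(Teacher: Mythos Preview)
Your argument is correct and rests on the same structural observation as the paper's proof: the splitting $\mathfrak{g}^{\C}\cong\mathfrak{g}\oplus\bar{\mathfrak{g}}$ and $V^{\C}\cong V\oplus\bar V$ induced by the complex structure $J$, with the real form sitting diagonally and the complexified action becoming a product action. The conclusion $G^{\C}v\cap V=Gv$ then drops out exactly as you describe.

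Where you and the paper differ is in the passage from Lie algebra to Lie group. The paper handles this by lifting to universal covers: it identifies $\widetilde{G_0^{\C}}\cong\widetilde{G_0}\times\widetilde{G_0}$ as an honest group isomorphism (the universal complexification of a simply connected complex Lie group), pulls the representation back along the covering maps, and then checks that the induced action on $V\oplus V$ is the product action by verifying it on the diagonal and invoking uniqueness of holomorphic extensions. You instead stay downstairs and work inside $GL(V^{\C})$: you show $d\rho^{\C}(\mathfrak{m}_{\pm})$ acts trivially on $V_{\mp}$, so $d\rho^{\C}(\mathfrak{g}^{\C})$ is the Lie algebra of the block-diagonal group $\rho(G_0)\times\rho(G_0)$, and then use that a connected (immersed) Lie subgroup of $GL(V^{\C})$ is determined by its Lie algebra to conclude $\rho^{\C}(G_0^{\C})=\rho(G_0)\times\rho(G_0)$. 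Your route is a bit more elementary, avoiding covering theory and universal complexification entirely; the paper's route gives a cleaner group-theoretic picture (an actual product decomposition of the group, not just of its image) at the cost of that extra machinery. Both handle the disconnected case identically via $G^{\C}=GG_0^{\C}$.
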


We shall prove this statement using the structure theory for Lie groups, as a reference for the results we use, we refer to \cite{Neeb}.

\

Consider now the identity component $G_0$ of $G$, which is a real form of the identity component $G^{\mathbb{C}}_0$. Let $\widetilde{G_0}$ and $\widetilde{G^{\mathbb{C}}_0}$ be the universal covering groups. 

We will now discuss some covering theory and universal complexification of Lie groups, for proofs we again refer to: \cite{Neeb}, for example chapter 15.

Now if $\mathfrak{g}$ has a complex structure $J$ then $\mathfrak{g}$ has the structure of a complex Lie algebra of dimension $\frac{1}{2}Dim(\mathfrak{g})$. So $G_0$ is a complex Lie group. In particular we note that $\mathfrak{g}^{\mathbb{C}}\cong \mathfrak{g}\oplus\bar{\mathfrak{g}}$, where $\bar{\mathfrak{g}}$ is the complex structure on $\mathfrak{g}$ obtained from $-J$. This isomorphism takes $$x+iy\mapsto \Big{(}x+J(y),x-J(y)\Big{)}, \ x,y\in \mathfrak{g},$$ and so therefore $\mathfrak{g}$ can be identified with the set $\{(x,x)|x\in \mathfrak{g}\}$ as a real form of $\mathfrak{g}\oplus\mathfrak{g}$. So the universal complexification group of the universal covering $\widetilde{G_0}$, is just the universal covering: $\widetilde{G_0^{\mathbb{C}}}$, and thus must be isomorphic (as Lie groups) to the product: $$\widetilde{G_0^{\mathbb{C}}}\cong \widetilde{G}_0\times \widetilde{G}_0.$$ Here the right component: $\widetilde{G}_0$, of the product is called the \textsl{opposite complex Lie group} of $\widetilde{G}_0$, since it has complex Lie algebra $\bar{\mathfrak{g}}$. The left and right components of the product are not necessarily isomorphic (as Lie groups) unless $\mathfrak{g}$ (as a complex Lie algebra), has the existence of a real form, like for instance if $\mathfrak{g}$ were reductive. The universal complexification map is simply the diagonal embedding: $$g\mapsto (g,g), \ g\in \widetilde{G}_0.$$ Moreover w.r.t to this map $\widetilde{G}_0$ is a real form of $\widetilde{G_0^{\mathbb{C}}}$ identified as the image: $\{(g,g)|g\in \widetilde{G}_0\}\subset\widetilde{G_0^{\mathbb{C}}}.$

\

An example to have in mind is $G:=O(3,1)$, with $G_0\cong SO(3,\C)$, and $\widetilde{G_0}\cong SL(2,\C)$ with $\widetilde{G_0^{\mathbb{C}}}\cong SL(2,\C)\times SL(2,\C)$.

\

We prove the theorem in steps:

\

\emph{Step 1}. \emph{We extend the action to the universal covering groups.}

Let $\widetilde{G}_0\xrightarrow{p} G_0$ be the universal covering map of $G_0$. By the discussion above, we may assume w.l.o.g that we are dealing with the groups: $\widetilde{G_0^{\mathbb{C}}}:=\widetilde{G}_0\times \widetilde{G}_0$ and the real form: $\widetilde{G}_0:=\{(g,g)|g\in \widetilde{G}_0\}\subset \widetilde{G_0^{\mathbb{C}}}$. Set $\tilde{\mathfrak{g}}$ for the Lie algebra of $\widetilde{G}_0$, and let $\widetilde{G_0^{\mathbb{C}}}\xrightarrow{p_{\mathbb{C}}} G_0^{\mathbb{C}},$ be the unique lift of the Lie isomorphism: $$\tilde{\mathfrak{g}}^{\mathbb{C}}\rightarrow \mathfrak{g}^{\mathbb{C}}$$ induced from the following commutative diagram of Lie algebras: 

\begin{displaymath} \xymatrix{ \tilde{\mathfrak{g}} \ar[r]^{p_{*}} \ar[d]_{i} & \mathfrak{g} \ar[r]^{i} & \mathfrak{g}^{\mathbb{C}} \\ \tilde{\mathfrak{g}}^{\mathbb{C}} \ar[urr]^{\exists!}}\end{displaymath}

Explicitly this unique map is given by: $\tilde{x}+i\tilde{y}\mapsto p_{*}(\tilde{x})+ip_{*}(\tilde{y}), \ \tilde{x},\tilde{y}\in \tilde{\mathfrak{g}}.$ 

The map $p_{\mathbb{C}}$ is a universal covering map of $G_0^{\mathbb{C}}$, simply because $G_0^{\mathbb{C}}$ and the cover is connected, so the map is surjective. In particular we can induce an action of the covering groups on $V$ and $V^{\mathbb{C}}$ using the action: $G^{\mathbb{C}}\xrightarrow{\rho} GL(V^{\mathbb{C}})$ restricted to the identity components: $G_0,G_0^{\mathbb{C}}$. The following commutative diagram of Lie groups illustrates the induced action: 

\begin{displaymath} \xymatrix{ \widetilde{G}_0 \ar[r]^{p} \ar[d]_{i} & G_0 \ar[r]^{i} & G_0^{\mathbb{C}} \ar[d]^{\rho} \\ \widetilde{G_0^{\mathbb{C}}} \ar[urr]^{p_{\mathbb{C}}} \ar[rr]^{\rho\circ p_{\mathbb{C}}} & & GL(V^{\mathbb{C}}) }\end{displaymath}

\emph{Step 2}. \emph{We extend an equivalent action of the covering groups to $V\oplus V\cong V^{\C}$.}

Now since $V$ is a complex vector space with a complex structure $J$, we have an isomorphism $V^{\mathbb{C}}\xrightarrow{\phi} V\oplus V$, given by $$v_1+iv_2\mapsto \Big{(}v_1+J(v_2),v_2-J(v_2)\Big{)}.$$ So we may extend our action to $V\oplus V$ using $\phi$, given by the commutative diagram as follows: 

\begin{displaymath} \xymatrix{ \widetilde{G_0^{\mathbb{C}}} \ar[r]^{\rho\circ p_{\mathbb{C}}} & GL(V^{\mathbb{C}}) \ar[r]^{\phi} & GL(V\oplus V) \\ \widetilde{G_0} \ar[r] \ar[u]^{i}  & GL(V) \ar[u]^{i} \ar[r]^{\phi} & GL(V) \ar[u]^{i} }\end{displaymath}

\emph{Step 3}. \emph{There is a unique real orbit in the complex orbit.}

We claim that for $(g,h)\in \widetilde{G_0^{\mathbb{C}}}$, the action is given by the product action: $$(g,h)\cdot (v_1,v_2):=\Big{(}(g,g)\cdot v_1, (h,h)\cdot v_2\Big{)},$$ where $(g,h)\cdot v_j:=(\rho\circ p_{\mathbb{C}})(g,h)(v_j)$. Indeed since $\widetilde{G_0^{\mathbb{C}}}$ is the universal complexification group, and the product action above is clearly holomorphic, since our action $G\rightarrow GL(V)$ is holomorphic, then it is enough to show that the action of the real form $\widetilde{G_0}$ on $V$ is the product action. By definition we have: $$(g,g)\cdot (v,v):=\phi\Big{(}(g,g)\cdot(\phi^{-1}(v,v))\Big{)}=\phi((g,g)\cdot v)=\Big{(}(g,g)\cdot v, (g,g)\cdot v\Big{)}, \forall v\in V.$$ Now clearly we have a unique real orbit in the complex orbit, i.e: $$\widetilde{G_0^{\mathbb{C}}}v\cap V=\widetilde{G_0}v, \forall v\in V.$$

So finally we derive our theorem: 

\begin{proof} [Proof of theorem \ref{complex}] We note that in order to prove the statement for the groups $G$ and $G^{\mathbb{C}}$, then it is enough to prove the statement for the restricted action of the identity components. This is seen as follows. By definition of $G$ being a real form of $G^{\mathbb{C}}$, (see Definition \ref{real}), this means that $G^{\mathbb{C}}=GG_0^{\mathbb{C}}$ as abstract groups. So if we have $v_1,v_2\in V$ belonging to the same $G^{\mathbb{C}}$-orbit then write $gh\cdot v_1=v_2$ for $g\in G$ and $h\in G_0^{\mathbb{C}}$, so clearly $g^{-1}v_2$ and $v_1$ belong to the same $G_0^{\mathbb{C}}$-orbit. Now if $G_0v_1=G_0g^{-1}v_2$, then also $Gv_1=Gv_2$. Finally to prove it for the identity components it is enough to prove the statement for the induced action of the universal covering groups: $\widetilde{G}_0$ and $\widetilde{G^{\mathbb{C}}_0}$, on $V$ and $V^{\mathbb{C}}$ defined as above. But since the statement has already been proven for this case, then the theorem is proved. \end{proof}

We can naturally apply the theorem to the adjoint action of any complex Lie group $G$. For example $O(3,1)$ has the structure of a complex Lie group, and is the only one among the pseudo-orthogonal Lie groups: $O(p,q)$. So we can apply the theorem to for instance the diagram:

\beq
\begin{CD}
O(4,\C) @>Ad>> GL(\mathfrak{o}(4,\C)) \\
@A{i}AA @A{i}AA \\
O(3,1) @>Ad>> GL(\mathfrak{o}(3,1))
\end{CD}
\eeq

\section{Applications to the pseudo-Riemannian setting}
For the Lorentzian spaces it is useful to use the boost-weight decomposition and the corresponding algebraic classification of tensors \cite{Class}. This turns out to give a very crisp result as to which spaces have non-closed orbits and hence cannot be Wick-rotated to a Riemannian space: 
\begin{thm} 
Given a Lorentzian manifold and assume that (any of) the curvature tensors is of proper type II, III, or N. Then it cannot be Wick-rotated to a real Riemannian space.
\end{thm}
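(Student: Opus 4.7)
The plan is to reduce the claim to an orbit-closure statement and then exhibit, using the boost-weight decomposition, explicit degenerations inside $O(n-1,1)$ that prevent the orbit from being closed.

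\textbf{Reduction.} Suppose for contradiction that the Lorentzian space $P$ is Wick-rotated at some point $p$ to a Riemannian space $Q$. Let $T$ be the offending curvature tensor at $p$, viewed as an element of the appropriate $O(n-1,1)$-module $V$, and let $\tilde T$ be the corresponding tensor of $Q$ in the compact real form $\tilde V$ on which $O(n)$ acts. As explained in Section \ref{FB}, $T$ and $\tilde T$ then lie in the same complex $O(n,\C)$-orbit. Since $O(n)$ is compact, $O(n)\tilde T$ is closed, so by Theorem \ref{BC}(1) the complex orbit $O(n,\C)T = O(n,\C)\tilde T$ is closed; a second application of Theorem \ref{BC}(1) forces the real orbit $O(n-1,1)T$ to be closed in $V$. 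It therefore suffices to show that the $O(n-1,1)$-orbit of any proper type II, III, or N tensor fails to be closed.

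\textbf{Boost-weight degeneration.} Choose a null frame $(\ell, n, e_2,\ldots,e_{n-1})$ adapted to the algebraic type of $T$, and let $\phi_t \in O(n-1,1)$ be the one-parameter boost $\ell \mapsto e^t\ell$, $n \mapsto e^{-t}n$, $e_i \mapsto e_i$; a tensor component of boost weight $b$ is then scaled by $e^{bt}$. If $T$ is of proper type III or N, all its non-zero boost weights are strictly negative, so $\phi_t \cdot T \to 0$ as $t \to +\infty$. Since $T \neq 0$, we conclude $0 \in \overline{O(n-1,1)T}\setminus O(n-1,1)T$, and the orbit is not closed. If $T$ is of proper type II, decompose $T = T_0 + T_-$ into its boost-weight-$0$ part $T_0$ and its strictly negative-weight part $T_-$; properness forces $T_- \neq 0$, and $\phi_t\cdot T \to T_0$ as $t \to +\infty$. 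The tensor $T_0$ is manifestly of type D in the chosen frame, and the algebraic types II/D/III/N are intrinsic Lorentz invariants of the tensor; hence $T_0 = g\cdot T$ for some $g \in O(n-1,1)$ would place $T$ itself in type D, contradicting properness. Consequently $T_0 \in \overline{O(n-1,1)T}\setminus O(n-1,1)T$, so the orbit again fails to be closed. Either conclusion contradicts the reduction step and proves the theorem.

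\textbf{Main obstacle.} For types III and N the degeneration is to the zero tensor, and non-closedness is evident. The harder case is proper type II: the limit $T_0$ is a non-trivial tensor, and one must rule out that $T_0$ already lies in the $O(n-1,1)$-orbit of $T$. This step relies on the fact that the boost-weight classification is a genuine invariant of the Lorentz orbit, so that the \emph{proper} distinction between type II and type D is preserved along the orbit and not merely in a distinguished frame; once this is granted, the argument above completes the proof.
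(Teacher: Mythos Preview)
Your argument is correct and matches the paper's: the reduction via Theorem~\ref{BC} to closedness of the $O(n-1,1)$-orbit is identical, and the only difference is that the paper outsources the non-closedness of proper type II/III/N orbits to \cite{minimal}, whereas you supply the boost-weight degeneration argument explicitly. Your treatment of the type~II case---degeneration to the boost-weight-$0$ part $T_0$ and exclusion of $T_0$ from the orbit via Lorentz-invariance of the algebraic type---is precisely the content behind that citation.
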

\begin{proof}
Considering the real orbits $Gx$ and $\tilde{G}\tilde{x}$, where $G=O(n)$ and $\tilde{G}=O(n-1,1)$ embedded into the same $G^\C x$. Then by Theorem \ref{BC}, we have that the real orbits are (topologically) closed if and only if $G^\C x$ is closed. Since, $O(n)$ is compact, $Gx$ is necessary compact and closed. Hence, $G^\C x$, is closed, implying $\tilde{G}\tilde{x}$ is closed. However, tensors of proper type II, III, and N, do not have closed orbits, see \cite{minimal}.  	
\end{proof}

This result can be generalised to the pseudo-Riemannian case by classifying the types of tensors that give non-closed orbits. For the pseudo-Riemannian case, the Lie algebra $\mathfrak{g}=\mathfrak{o}(p,q)$ can be split into a positive eigenspace and negative eigenspace of the Cartan involution: 
\[ \mathfrak{g}=\mathfrak{t}\oplus\mathfrak{p}.\]
If the orbit $Gx$ is not closed, then by Theorem \ref{RS} there exists an $X\in \mathfrak{p}$ and a $v_0\in \overline{Gx}\setminus Gx\subset V$ so that $\exp(tX)x\rightarrow v_0$ as $t\rightarrow -\infty$. Thus, this implies the existence of a $v_0$ on the boundary of $Gx$ which is not in $Gx$. The orbit $Gx$ is therefore not closed. Note also that the Lie algebra element $X\in \mathfrak{p}$ generates a one-parameter group $B_t:=\left\{\exp(Xt)~:~ t\in \R\right\}\subset G$ manifesting this limit. 

We recall that a tensor $T$ living on a pseudo-Riemannian manifold can be decomposed using the boost-weight decomposition with respect to a "null-frame" \cite{HC}. Let $k=\min(p,q)$ be the real rank of the group $O(p,q)$. Then, in terms of a orthonormal frame: 
\beq
g(X,Y)=-X_1Y_1-...-X_kY_k+X_{k+1}Y_{k+1}+...+X_nY_n.
\label{metric}\eeq
Let $\mathfrak{a}\subset\mathfrak{p}$ be the largest abelian subalgebra of $\mathfrak{p}$. All such will have dimension equal to $k$. For each $\lambda\in \mathfrak{a}^*$ (the dual of $\mathfrak{a}$) we define
\[ \mathfrak{g}_{\lambda}=\left\{x\in\mathfrak{g} : [y,x]=\lambda(y) x\text{ for } y\in\mathfrak{a}\right\}.\]
 The $\lambda$ is called the \emph{restricted root} of $(\mathfrak{g},\mathfrak{a})$ if $\lambda\neq 0$ and $\mathfrak{g}_\lambda \neq 0$ (see, e.g., \cite{Helgason}). Let $\Sigma$ be the set of restricted roots. The Lie algebra $\mathfrak{g}$ can now be decomposed in the restricted root decomposition: 
\[ \mathfrak{g}=\mathfrak{g}_0\oplus \bigoplus_{\lambda\in\Sigma}\mathfrak{g}_\lambda. \] 
Here, $\mathfrak{g}_0=\mathfrak{a}\oplus\mathfrak{m}$ where $\mathfrak{m}$ is the centraliser of $\mathfrak{a}$ in $\mathfrak{t}$. 

Let now $\mathfrak{a}'\subset\mathfrak{a}$ be the set of regular elements:
\[ \mathfrak{a}'=\left\{ x\in \mathfrak{a} : \lambda(x)\neq 0, \forall \lambda\in\Sigma\right\}.\] 
The set $\mathfrak{a}'$ is the complement of hyperplanes, and let $\mathfrak{a}^+$ be one connected component of $\mathfrak{a}'$ (this is called a Weyl chamber). We say that a root, $\lambda\in\Sigma$, is positive if it has only positive values on $\mathfrak{a}^+$, and  \emph{simple} if it cannot be written as a sum of positive roots. If $\left\{ \alpha_1,...,\alpha_k\right\}$ is the set of simple roots, then the set $\mathfrak{a}^+$ can be given by: 
\[ \mathfrak{a}^+=\left\{ x\in \mathfrak{a} : \alpha_{1}(x)>0, ...,\alpha_k(x)>0\right \}.\] 
By normalising, we can find $k$ linearly independent elements $\mathcal{X}^I\in \mathfrak{a}\subset\mathfrak{p}$, $I=1,...,k$, satisfying the following  criteria: 
\begin{enumerate}	
\item{} $[\mathcal{X}^I,\mathcal{X}^J]=0$, and 
\item{} $\alpha_J(\mathcal{X}^I)=\delta^I_J$. 
\end{enumerate} 
Since $\mathcal{X}^I\in\mathfrak{o}(p,q)\subset\mathrm{End}(T_pM)$, and is symmetric with respect to the inner product  $g_\theta(-,-)$ defined in Defn. \ref{CartanInv}, the eigenvalues are real, and we can find simultaneous eigenvectors of $T_pM$. The corresponding eigenvector decomposition w.r.t. the set $\{\mathcal{X}^I\}$ is identical to the so-called \emph{boost-weight decomposition} \cite{Class}. 

By letting the $\mathcal{X}^I$ act tensorally on $V=\bigoplus T_pM$, an eigenvector decomposition of $V$ can also be achieved. Note that the metric $g$, as a symmetric tensor, is a zero-eigenvector of all $\mathcal{X}^I$ due to the fact that $\mathcal{X}^i\in\mathfrak{o}(p,q)$. Hence, the duality map, $\sharp: T_pM\rightarrow T^*_pM$ induced by the metric $v\overset{\sharp}{\mapsto}v^{\sharp}\equiv g(v,-)$ preserves the boost-weight decomposition. 
Thus an arbitrary tensor $T$ can now be decomposed by the eigenvalues with respect to $\mathcal{X}^I$
\[ T=\sum_{{\bf b}\in \Gamma}\left(T\right)_{\bf b},\]
where $\Gamma\subset \mathbb{Z}^k$ is a finite subset of $\mathbb{Z}^k$.

\begin{lem}
Let $x\in \mathfrak{p}$. Then there exists an $\tilde{x}\in Hx$, where $H$ is the stabilizer of the Cartan involution, so that $\tilde{x}=\lambda_1{\mathcal X}^1+...+\lambda_k{\mathcal X}^k$, where ${\mathcal X}^I$ are the elements given above.	
\end{lem}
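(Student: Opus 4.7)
The statement is essentially the assertion that any element of $\mathfrak{p}$ can, via the adjoint action of the stabilizer of the Cartan involution, be conjugated into the maximal abelian subalgebra $\mathfrak{a}$ (of which $\mathcal{X}^1,\ldots,\mathcal{X}^k$ form a basis). So the plan is to identify $H$ as the maximal compact subgroup $K$ (with Lie algebra $\mathfrak{t}$), then invoke the standard conjugation theorem from the theory of symmetric spaces, and finally observe that the $\mathcal{X}^I$ span $\mathfrak{a}$.

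First, I would recall that the Cartan involution $\Theta: G\to G$ with differential $\theta$ has as its fixed-point set precisely $K$, and $\mathrm{Ad}(K)$ preserves both $\mathfrak{t}$ and $\mathfrak{p}$ (since $\theta$ is $\mathrm{Ad}(K)$-equivariant). In particular the stabilizer $H$ of $\theta$ under the adjoint action of $G$ on $\mathrm{End}(\mathfrak{g})$ coincides with $K$, and it acts on $\mathfrak{p}$ by restriction of the adjoint representation. This identification is the only slightly delicate point, but it follows immediately from $\mathrm{Ad}(g)\circ \theta \circ \mathrm{Ad}(g)^{-1}=\theta$ iff $g\in K$.

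Next, I would appeal to the classical theorem (see e.g.\ Helgason, \emph{Differential Geometry, Lie Groups, and Symmetric Spaces}, Ch.~VI, or standard references on real semisimple Lie algebras) stating that
\[
\mathrm{Ad}(K)\,\mathfrak{a} \;=\; \mathfrak{p},
\]
where $\mathfrak{a}\subset\mathfrak{p}$ is any maximal abelian subalgebra. Applied to the given $x\in\mathfrak{p}$, this furnishes some $k\in K=H$ such that $\tilde{x}:=\mathrm{Ad}(k)(x)\in\mathfrak{a}$, i.e.\ $\tilde{x}\in Hx$.

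Finally, by construction the elements $\mathcal{X}^1,\ldots,\mathcal{X}^k$ are commuting elements of $\mathfrak{a}$ and are linearly independent, as can be read off from the normalisation $\alpha_J(\mathcal{X}^I)=\delta^I_J$ (the simple roots form a basis of $\mathfrak{a}^*$). Since $\dim_{\mathbb{R}}\mathfrak{a}=k$ by the definition of the real rank, the $\mathcal{X}^I$ form a basis of $\mathfrak{a}$. Expanding $\tilde{x}$ in this basis yields $\tilde{x}=\lambda_1\mathcal{X}^1+\cdots+\lambda_k\mathcal{X}^k$, completing the argument. The main (and really only) obstacle is the conjugation theorem $\mathrm{Ad}(K)\mathfrak{a}=\mathfrak{p}$, which however is a standard ingredient of the structure theory and can simply be quoted.
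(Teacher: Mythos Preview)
Your argument is correct but takes a genuinely different route from the paper. The paper gives a completely concrete, linear-algebraic proof specific to $G=O(p,q)$: it writes $x\in\mathfrak{p}$ in the block form
\[
x=\begin{bmatrix} 0 & A \\ A^t & 0 \end{bmatrix},
\]
observes that the adjoint action of $(h,g)\in O(p)\times O(q)\subset H$ sends $A\mapsto h^{-1}Ag$, and then applies the singular value decomposition to diagonalise $A$; the resulting diagonal entries are the coefficients $\lambda_I$. By contrast, you work abstractly: you identify $H$ with $K$ and invoke the general conjugation theorem $\mathrm{Ad}(K)\mathfrak{a}=\mathfrak{p}$ from the structure theory of real semisimple Lie algebras, then note that the $\mathcal{X}^I$ form a basis of $\mathfrak{a}$. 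Your approach has the virtue of applying verbatim to any real semisimple $G$, not just $O(p,q)$; the paper's approach is more elementary and self-contained, needing only the SVD rather than a cited structure theorem, and makes the $\lambda_I$ explicit as singular values of the off-diagonal block.
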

\begin{proof}
The stabilizer of the Cartan involution, $H$, fulfills $H^{-1}\theta H=\theta$, and is the largest compact subgroup of $O(p,q)$. In particular, $O(p)\times O(q)\subset H$. In terms of the orthonormal frame, we can write $x\in \mathfrak{p}$ in block-form: 
\[x=\begin{bmatrix}
0 & A \\
A^t & 0
\end{bmatrix}, 
 \]
 where $A$ is a $p\times q$ matrix.  
 The action of $O(p)\times O(q)$ on $x$ induces the following action on $A$: $A\mapsto h^{-1}Ag$ where $(h,g)\in O(p)\times O(q)$. By the singular value decomposition, we can always find a $(h,g)\in O(p)\times O(q)$ so that $h^{-1}Ag$ is diagonal. Thus $h^{-1}Ag=\diag(\lambda_1,...,\lambda_k)$ and $\tilde{x}=\lambda_1{\mathcal X}^1+...+\lambda_k{\mathcal X}^k$.
\end{proof}
Using the representative $\tilde{x}$ rather than $x$, we can now give a criteria for when a tensor does not have a closed orbit. Given a non-closed orbit $Gx$. Then there exists a $\tilde{x}=\lambda_1{\mathcal X}^1+...+\lambda_k{\mathcal X}^k\in \mathfrak{p}$ so that for ${\bf b}=(b_1,...,b_k)$ we have when $t\rightarrow \infty$
\[ B_t(T)=\exp(t\tilde{x})(T)=\sum_{{\bf b}\in \Gamma}\exp[t(b_1\lambda_1+...+b_k\lambda_k)(T)_{\bf b}\rightarrow v_0.\]
For this limit to exist we have either:
\begin{enumerate}
	\item {} $(T)_{\bf b}=0$, or
	\item $b_1\lambda_1+...+b_k\lambda_k\leq 0$,
\end{enumerate}
for all ${\bf b}\in \Gamma.$ Tensors for which such a $\tilde{x}\in \mathfrak{p}$ exists are referred to as tensors possessing the $S^G$-property \cite{SHII}. If $v_0$ is not it the $G$-orbit of $T$, then the orbit is not closed. 
\subsection{Pseudo-Riemannian examples} 
\paragraph{4-dimensional Neutral examples: Walker metrics} The Walker metrics allow for an invariant null-plane and provide with examples of metrics that do not allow for a Wick-rotation. Walker \cite{Walker}  showed that the requirement of an  
invariant $2$-dimensional null plane implies that the (Walker) metric can be written in the canonical form:  
\beq 
\mathrm{d}s^2=2\d u(\d v+A\d u+C\d U)+2\d U(\d V+B\d U), 
\eeq 
where $A$, $B$ and $C$ are functions that  may depend on all of the coordinates. By introducing the null-coframe: 
\beq
\{ e^1,e^2,e^3,e^4\}=\{\d u,\d v+A\d u+C\d U,\d U,\d V+B\d U\}
\eeq
We express the Riemann tensor in terms of this frame so that: 
\[ R=R_{ijkl}e^i\wedge e^i\otimes e^k\wedge e^l \]
Then define the boost-weight of a component, $R_{ijkl}$, as the  a pair $(b_1,b_2)$ where 
\[ (b_1,b_2)=(\#(2)-\#(1),  \#(4)-\#(3)), \] 
where $\#(n)$ means the number of indices equal to $n$. We note that the isometry $\phi:~\{ e^1,e^2,e^3,e^4\} {\mapsto} \{ e^3,e^4,e^1,e^2\}$ interchanges the boost components: $(b_1,b_2)\mapsto (b_2,b_1)$. 

For the Walker metrics, one can easily compute the Riemann tensor and one observes that $R_{ijkl}=0$ if $b_1+b_2>0$. Different functional forms of the functions $A$, $B$, and $C$, gives various possibilities for the remaining components. Whether the orbits form closed orbits or not is summarised in the following table. Here we have assumed that the types are "proper", i.e., it is not possible to find other frames so that it is in a simpler category. 

\noindent
\begin{tabular}{|r|c|c|c|c|c|}
\hline 
$R_{ijkl}$ & $b_1+b_2>0$ & $b_1+b_2<0$ & $0<b_1=-b_2$ &$b_1=-b_2<0$ & Closed? \\
\hline
W1 & $0$ & $\neq 0$ & Any &Any & No \\
W2 & $0$ & $ 0$ & $\neq 0$ & $\neq 0$ & Yes \\
W3 & $0$ & $ 0$ & $ 0$ & $\neq 0$ & No \\
W4 & $0$ & $ 0$ & $ 0$ & $ 0$ & Yes \\
\hline
\end{tabular}\\

The generic Walker metric (type W1) is not closed and thus not possible to Wick-rotate to a Riemannian space. Examples of Walker metrics are given in \cite{SHII}. As simple examples in each category ($a$, $b$, $c$ and $d$ are non-zero constants): 
\beq
\text{W1}:&& \mathrm{d}s_1^2=2\d u(\d v+V\d u)+2\d U(\d V+av^4\d U)\nonumber \\
\text{W2}:&& \mathrm{d}s_2^2=2\d u(\d v+(av^2+bV^2)\d u)+2\d U(\d V+(cv^2+dV^2)\d U) \nonumber \\
\text{W3}:&& \mathrm{d}s_3^2=2\d u(\d v+(av^2+bV^2)\d u)+2\d U(\d V+cV^2\d U)\nonumber \\
\text{W4}:&& \mathrm{d}s_4^2=2\d u(\d v+av^2\d u)+2\d U(\d V+bV^2\d U)
\eeq
Of these, the W4 example metric can be Wick-rotated to a Riemannian space, while the W1 and W3 cannot (in general) due to the fact that they do not have closed orbits. However, both the W3 and  W4 examples can be Wick-rotated to  Lorentzian spaces.

\section*{Acknowledgements} 
This work was supported through the Research Council of Norway, Toppforsk
grant no. 250367: \emph{Pseudo-Riemannian Geometry and Polynomial Curvature Invariants:
Classification, Characterisation and Applications.}

\appendix

\section{On compatible Hermitian inner products}

We follow the notation in Subsection \ref{triple}, and prove an extension of the statement \emph{2.9} in \cite{RS}. Recall that a Hermitian inner product $\langle-,-\rangle$ on $V^{\C}$ is said to be \emph{compatible} with $V$, if $\langle-,-\rangle\in \R$ on $V$. 

\begin{prop} Assume we have a compatible triple: $\Big{(}\mathfrak{g},\tilde{\mathfrak{g}},\mathfrak{u}\Big{)}$. If $V$ and $\tilde{V}$ are compatible real forms of $V^{\C}$, then there exist a $U$-invariant Hermitian inner product on $V^{\C}$ which is compatible with $V$ and $\tilde{V}$.  \end{prop}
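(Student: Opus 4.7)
The plan is to construct the desired Hermitian inner product by symmetrizing an arbitrary $U$-invariant one, in two stages. First, average any positive-definite Hermitian inner product on $V^{\C}$ with respect to Haar measure on the compact group $U$ to obtain a $U$-invariant positive-definite Hermitian inner product $h_0$. Then set
\[
h_1(x,y):=\tfrac{1}{2}\bigl(h_0(x,y)+\overline{h_0(\sigma_V x,\sigma_V y)}\bigr),\qquad h_2(x,y):=\tfrac{1}{2}\bigl(h_1(x,y)+\overline{h_1(\sigma_{\tilde{V}} x,\sigma_{\tilde{V}} y)}\bigr),
\]
where $\sigma_V$ and $\sigma_{\tilde{V}}$ denote the conjugation maps of $V^{\C}$ with fixed-point sets $V$ and $\tilde{V}$. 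The claim is that $h_2$ is the sought-after form.

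Sesquilinearity, Hermiticity and positive-definiteness are preserved by each averaging step, since $\sigma_V,\sigma_{\tilde{V}}$ are anti-linear involutions and $h_0$ is positive definite. Compatibility of $h_1$ with $V$ is immediate from $\sigma_V|_V=\mathrm{Id}$: for $v_1,v_2\in V$, $h_1(v_1,v_2)=\mathrm{Re}\,h_0(v_1,v_2)\in\R$. Compatibility of $h_2$ with $\tilde{V}$ is the same observation one level up. For the less obvious fact that $h_2$ is still compatible with $V$, use that the compatibility of $V$ and $\tilde{V}$ yields the splitting $V=(V\cap\tilde{V})\oplus (V\cap i\tilde{V})$, from which $\sigma_{\tilde{V}}(V)\subseteq V$; hence for $v_1,v_2\in V$ both $h_1(v_1,v_2)$ and $h_1(\sigma_{\tilde{V}}v_1,\sigma_{\tilde{V}}v_2)$ are real, since $h_1$ was already compatible with $V$.

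The crucial step is verifying that the two symmetrizations preserve $U$-invariance. Here the compatibility of the triple $\bigl(\mathfrak{g},\tilde{\mathfrak{g}},\mathfrak{u}\bigr)$ is essential: lifting the pairwise commuting conjugations on $\mathfrak{g}^{\C}$ to the Lie group $G^{\C}$ yields anti-holomorphic involutions $\sigma_G,\sigma_{\tilde{G}},\sigma_U$ that pairwise commute, so in particular $\sigma_G(U)=U$ and $\sigma_{\tilde{G}}(U)=U$. Since $\rho^{\C}$ is holomorphic and restricts to $\rho$ on $G$, a direct computation on $x=v+iw$ with $v,w\in V$ yields the intertwining identity
\[
\sigma_V\circ\rho^{\C}(g)=\rho^{\C}(\sigma_G(g))\circ\sigma_V,\qquad g\in G^{\C},
\]
and similarly with $\sigma_{\tilde{V}}$ and $\sigma_{\tilde{G}}$. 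Hence for $u\in U$ one has $\sigma_V(u\cdot x)=\sigma_G(u)\cdot\sigma_V(x)$ with $\sigma_G(u)\in U$, which together with the $U$-invariance of $h_0$ gives $h_1(u x,u y)=h_1(x,y)$; the identical argument passes $U$-invariance from $h_1$ to $h_2$.

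The main obstacle is precisely this last point: one must carefully transport the commuting-conjugations data from the Lie algebra triple up to $G^{\C}$ and across to $GL(V^{\C})$ via $\rho^{\C}$ in order to obtain the intertwining relation between $\sigma_V,\sigma_{\tilde{V}}$ and the $U$-action. Once the intertwining is in hand, the remainder of the argument is a routine unwinding of definitions.
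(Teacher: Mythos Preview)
Your argument is correct, and it reaches the same conclusion as the paper but by a somewhat different route.

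The paper follows Richardson--Slodowy's 2.9 more closely: it embeds $G^{\C}$ into $GL(\mathfrak{g}^{\C})$, enlarges the group to $(G^{\C})^*$ generated by $G^{\C}$ together with the complex structure $J$ and the two conjugation maps $\sigma,\tilde{\sigma}$, and correspondingly enlarges the compact subgroup to $U^*:=\langle U,J,\sigma,\tilde{\sigma}\rangle$. The representation $\rho^{\C}$ is then extended to $(G^{\C})^*$ by sending $\sigma\mapsto\sigma_V$, $\tilde{\sigma}\mapsto\sigma_{\tilde{V}}$, $J\mapsto\tilde J$, which is well-defined precisely because $[\sigma_V,\sigma_{\tilde{V}}]=0$. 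A single averaging over the compact group $U^*$ then produces an inner product that is simultaneously $U$-invariant, $J$-invariant (hence Hermitian), and invariant under both conjugations (hence real on $V$ and on $\tilde{V}$).

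Your approach unpacks this into a three-stage averaging: first over $U$, then over the order-two group generated by $\sigma_V$, then over the one generated by $\sigma_{\tilde{V}}$. The substantive content is the same --- the crucial point in both arguments is that the compatible-triple hypothesis forces $\sigma,\tilde{\sigma}$ to normalise $\mathfrak{u}$, hence $U$, so that the conjugations can be adjoined to $U$ without destroying compactness (paper) or, equivalently, so that the intertwining relation $\sigma_V\circ\rho^{\C}(g)=\rho^{\C}(\sigma_G(g))\circ\sigma_V$ with $\sigma_G(U)=U$ holds (your version). Your route is more self-contained, since it does not invoke the Richardson--Slodowy statement as a black box; the paper's route is tidier once that machinery is in place, since all invariances drop out of a single integration. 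One small point you pass over quickly: promoting the commutation of $\sigma,\tilde{\sigma},\tau$ from $\mathfrak{g}^{\C}$ to $G^{\C}$, and hence concluding $\sigma_G(U)=U$ rather than just $\sigma_G(U_0)=U_0$, needs a word for groups with several components --- but this is routine in the semisimple setting and the paper is equally brief about the analogous step $U^*\cap G^{\C}=U$.
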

\begin{proof} Let $G^{\C}\subset GL(n,\C)$ for $n\geq 1$ minimal. Now since $n\leq Dim_{\R}(\mathfrak{g}^{\C})$, then we may as well assume that $G,\tilde{G}\subset G^{\C}\subset GL(\mathfrak{g}^{\C})$. In particular the conjugation maps $\sigma$ and $\tilde{\sigma}$ of $\mathfrak{g}$ and $\tilde{\mathfrak{g}}$ commute, and are naturally contained in $GL(\mathfrak{g}^{\C})$, by assumption. Denote $\mathfrak{g}^{\C}\xrightarrow{J}\mathfrak{g}^{\C}$ for the complex structure. We now follow the \emph{proof of 2.9} in \cite{RS}, and do a slight change.

Set $(G^{\C})^*\subset GL(\mathfrak{g}^{\C})$ for the closed subgroup generated by $G^{\C}, J, \sigma$ and $\tilde{\sigma}$. Similarly define $K^*$ (respectively $\tilde{K}^*$) to be the compact subgroups of $(G^{\C})^*$ generated by $K, J$ and $\sigma$ (respectively $\tilde{K}, J$ and $\tilde{\sigma}$). Now since our Lie algebras form a compatible triple, then we know that $K,\tilde{K}\subset U$. So we also define $U^*$ to be the compact subgroup of $(G^{\C})^*$ generated by $U, J, \sigma$ and $\tilde{\sigma}$. Then clearly $U^*\cap G^{\mathbb{C}}=U$.

Now let $\sigma_1$ and $\tilde{\sigma}_1$ be the conjugation maps of $V$ and $\tilde{V}$ respectively. Also let $V^{\C}\xrightarrow{\tilde{J}} V^{\C}$ be the complex structure. Now we can easily extend the real representation: $G^{\C}\xrightarrow{\rho^{\C}} GL(V^{\C})$, to a real representation: $(G^{\C})^*\xrightarrow{(\rho^{\C})^*} GL(V^{\mathbb{C}})$, by simply defining: $$(\rho^{\C})^*=\rho^{\C}, \ on \ G^{\C}, \ \ (\rho^{\C})^*(\sigma)=\sigma_1, \ (\rho^{\C})^*(\tilde{\sigma})=\tilde{\sigma}_1, \ and \ (\rho^{\C})^*(J)=\tilde{J}.$$ The map is well-defined since $[\sigma_1,\tilde{\sigma}_1]=0$, as $V$ and $\tilde{V}$ are assumed to be compatible. It now follows that there exist a $U$-invariant Hermitian form, which is compatible with $V$ and $\tilde{V}$, as in the \emph{proof of 2.9} in \cite{RS}. The proposition is proved. 
 \end{proof}

An immediate corollary is the following:

\begin{cor} Assume we have a compatible triple: $\Big{(}\mathfrak{g},\tilde{\mathfrak{g}},\mathfrak{u}\Big{)}$. Let $V$ and $\tilde{V}$ be compatible real forms of $V^{\C}$. Then we can assume w.l.o.g that $\mathcal{M}(G,V)$ (the minimal vectors of $\rho$) and $\mathcal{M}(\tilde{G},\tilde{V})$ (the minimal vectors of $\tilde{\rho}$) are both contained in the same set of minimal vectors of the complexified action $\rho^{\C}$, i.e., we have embeddings  

\[
\begin{CD}
\mathcal{M}(G^{\C}, V^{\C}) @<{i}<< \mathcal{M}(\tilde{G},\tilde{V}) \\
@AA{i}A \\
\mathcal{M}(G,V) 
\end{CD}
\]

\end{cor}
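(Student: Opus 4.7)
The plan is to deduce the corollary directly from the preceding Proposition by comparing the minimal-vector criterion of Theorem \ref{RS}(4) for the three representations $\rho$, $\tilde\rho$ and $\rho^{\C}$ using one and the same Hermitian inner product. By the Proposition, we may choose a single $U$-invariant Hermitian inner product $\langle -,-\rangle_{\C}$ on $V^{\C}$ which is compatible with both real forms $V$ and $\tilde V$. Its restriction to $V$ (resp.\ $\tilde V$) is then a real $K$-invariant (resp.\ $\tilde K$-invariant) inner product of the kind used in Subsection \ref{triple}, and the associated norms agree on the real forms with the norm coming from $\langle -,-\rangle_{\C}$. So without loss of generality, these are the inner products used to define $\mathcal{M}(G,V)$, $\mathcal{M}(\tilde G,\tilde V)$ and $\mathcal{M}(G^{\C},V^{\C})$.

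Next I would apply the characterisation in Theorem \ref{RS}(4). For the complexified action, $G^{\C}$ viewed as a real Lie group has Cartan decomposition $G^{\C}=Ue^{i\mathfrak{u}}$, so a vector $v\in V^{\C}$ is minimal iff $\langle y\cdot v,v\rangle_{\C}=0$ for every $y\in i\mathfrak{u}=i\mathfrak{t}\oplus \mathfrak{p}$ (using $\mathfrak{u}=\mathfrak{t}\oplus i\mathfrak{p}$). Take now $v\in V$. For $y=iz$ with $z\in \mathfrak{t}$, one has $y\cdot v=i(z\cdot v)$ with $z\cdot v\in V$; since $d\rho(\mathfrak{t})$ is skew-symmetric w.r.t.\ the real restricted inner product, $\langle z\cdot v,v\rangle=0$, and Hermitian-linearity gives $\langle y\cdot v,v\rangle_{\C}=i\langle z\cdot v,v\rangle=0$ automatically. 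For $y\in \mathfrak{p}$, $y\cdot v\in V$ so $\langle y\cdot v,v\rangle_{\C}=\langle y\cdot v,v\rangle\in\R$, and this vanishes iff the corresponding real pairing vanishes. Hence $v\in V$ is $G^{\C}$-minimal in $V^{\C}$ iff $\langle y\cdot v,v\rangle=0$ for all $y\in\mathfrak{p}$, i.e.\ iff $v$ is $G$-minimal in $V$. This gives the inclusion $\mathcal{M}(G,V)\subset \mathcal{M}(G^{\C},V^{\C})$.

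The argument for $\tilde V$ is verbatim the same, since the chosen Hermitian form is simultaneously compatible with $\tilde V$ and since the compatible triple assumption supplies a Cartan involution $\tilde\theta$ of $\tilde{\mathfrak{g}}$ commuting with $\theta$, so $\tilde{\mathfrak{u}}=\tilde{\mathfrak{t}}\oplus i\tilde{\mathfrak{p}}$ sits inside the same $\mathfrak{u}$. One thus gets $\mathcal{M}(\tilde G,\tilde V)\subset \mathcal{M}(G^{\C},V^{\C})$ as well, producing the asserted diagram of inclusions.

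I expect the only subtle point to be the verification that the minimality test on $i\mathfrak{t}$ is automatic for vectors in $V$, which is where Hermitian-versus-real bilinearity of the inner product and the skew-symmetry of $d\rho(\mathfrak{t})$ have to be combined carefully; everything else is a direct translation between the real and complex pictures via the Proposition.
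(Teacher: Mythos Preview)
Your proof is correct and is precisely the argument the paper has in mind when it calls this an ``immediate corollary'': use the Proposition to fix a single $U$-invariant Hermitian form compatible with both $V$ and $\tilde V$, then compare the Kempf--Ness criterion (Theorem~\ref{RS}(4)) for $G$, $\tilde G$ and $G^{\C}$ via the splitting $i\mathfrak{u}=i\mathfrak{t}\oplus\mathfrak{p}$. One small point of precision: the minimality test for $G^{\C}$ is taken with respect to the \emph{real part} of $\langle-,-\rangle_{\C}$ (cf.\ the remark after properties (1)--(2) in Section~5.1), not the Hermitian pairing itself; your computation is unaffected, since you show $\langle iz\cdot v,v\rangle_{\C}=0$ outright for $z\in\mathfrak{t}$ and $\langle y\cdot v,v\rangle_{\C}\in\R$ for $y\in\mathfrak{p}$, but it is worth stating the criterion correctly.
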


\end{document}